%Submitted version

\documentclass[12pt]{amsart}
\usepackage{color}
\usepackage[all,cmtip]{xy}
\usepackage{amssymb}
\usepackage{amsmath}
\usepackage{amsthm}
\usepackage{amscd}
\usepackage{amsfonts}
\usepackage{graphicx}
\usepackage{multicol}
\usepackage{setspace}

\setlength{\textwidth}{16truecm} \setlength{\textheight}{22.2truecm}

\calclayout
%%%%%%%%%%%%%%%%%%%%%%%%

\newtheorem{dummy}{anything}[section]
\newtheorem{thm}[dummy]{Theorem}

\newtheorem{lem}[dummy]{Lemma}
\newtheorem{prop}[dummy]{Proposition}
\newtheorem{cor}[dummy]{Corollary}

\theoremstyle{definition}%%Change Theoremstyle

  \newtheorem{ex}[dummy]{Example}
  \newtheorem{rem}[dummy]{Remark}
   %\newtheorem*{summary}{Summary}

%%%%%%%%%%%%%%%%%%%%%%%%%%%%
  
%%%%%%%%%%%%%%%%%%%%%%%%%%

%%%%%%%%%%%%%%%%%%%%%%%%%%%%
%\renewcommand
%{\theequation}{\thesection.\arabic{dummy}}
%\newcommand
%{\eqncount}{\setcounter{equation}{\value{dummy}}%
%\addtocounter{dummy}{1}}
%%%%%%%%%%%%%%%%%%%%%%%%%%%%%%%%%%%%

\newcommand{\bA}{\mathbf A}
\newcommand{\bB}{\mathbf B}
\newcommand{\bC}{\mathbf C}
\newcommand{\bD}{\mathbf D}
\newcommand{\bE}{\mathbf E}
\newcommand{\bF}{\mathbf F}

\newcommand{\bM}{\mathbf M}

\newcommand{\bQ}{\mathbf Q}
\newcommand{\bP}{\mathbf P}

\newcommand{\bk}{\mathbf k}

\newcommand{\bbF}{\mathbb F}

\newcommand{\bbZ}{\mathbb Z}

\DeclareMathOperator{\Hom}{Hom} 
\DeclareMathOperator{\bHom}{{\bf Hom}} 
\DeclareMathOperator{\PHom}{PHom}

\DeclareMathOperator{\Ext}{Ext}

\DeclareMathOperator{\Ann}{Ann}

\newcommand{\G}{\Gamma}
\newcommand{\Sig}{\Sigma}
\newcommand{\z}{\zeta}
\newcommand{\e}{\epsilon}
\newcommand{\Lz}{L_{\z}}
\newcommand{\Uz}{U_{\z}}
\newcommand{\bPz}{\bP (\z)}

\newcommand{\Om}{\Omega}
\newcommand{\D}{\Delta}
\newcommand{\al}{\alpha}
\newcommand{\ot}{\otimes}

\newcommand{\la}{\langle}
\newcommand{\ra}{\rangle}

\newcommand{\bd}{\partial}
\newcommand{\id}{\mathrm{id}}

\newcommand{\proj}{\mathrm{proj}}

%\leftexp
%{e}{F} produces a left superscript eF

\def\maprt#1{\smash{\,\mathop{\longrightarrow}\limits^{#1}\,}}

\begin{document}

\title{Productive Elements in Group Cohomology}

\author{Erg{\" u}n Yal\c c\i n }

\address{Department of Mathematics, Bilkent
University, Ankara, 06800, Turkey.}

\email{yalcine@fen.bilkent.edu.tr}

\keywords{Group cohomology, chain complex, diagonal approximation}

\thanks{2000 {\it Mathematics Subject Classification.} Primary: 20J06; Secondary: 20C20, 57S17. \\ The author is partially supported by T\" UB\. ITAK-TBAG/110T712}

\date{\today}

\begin{abstract}
Let $G$ be a finite group and $k$ be a field of characteristic $p>0$. A cohomology class $\z \in H^n(G,k)$ is called productive if it annihilates $\Ext^*_{kG}(\Lz ,\Lz)$. We consider the chain complex $\bPz$ of projective $kG$-modules which has the  homology of an $(n-1)$-sphere and whose $k$-invariant is $\z$ under a certain polarization. We show that $\z$ is productive if and only if there is a chain map $\Delta: \bPz \to \bPz \otimes \bPz$ such that $(\id \otimes \epsilon)\Delta\simeq \id$ and $(\epsilon \otimes \id)\Delta \simeq \id$. Using the Postnikov decomposition of $\bPz \ot \bPz$, we prove that there is a unique obstruction for constructing a chain map $\D$ satisfying these properties. Studying this
obstruction more closely, we obtain theorems of Carlson \cite{Paper:Carlson1987Products} and Langer \cite{Paper:Langer2010DyerLashoff} on productive elements.
\end{abstract}

\maketitle

\section{introduction}
\label{sect:Introduction}

Let $G$ be a finite group and $k$ be a field of characteristic $p>0$. Let $\z \in H^n(G,k)$ denote a nonzero cohomology class of degree $n$ where $n\geq 1$. Associated to $\z$, there is a unique $kG$-module homomorphism $\hat \z : \Omega ^n k\to k$ and the $kG$-module $\Lz$ is defined as the kernel of this homomorphism. A cohomology class $\z$ is called productive if it annihilates the cohomology ring $\Ext^*_{kG}(\Lz,\Lz)$. In this paper, we study the conditions for a cohomology class to be productive.

Under the usual identification of $H^n(G,k)$ with the group $\mathcal{U}^n (k,k)$ of $n$-fold $kG$-module extensions of $k$ by $k$, the cohomology class $\z$ is the extension class of an extension of the form
\begin{equation}\label{eqn:extension}
0 \to k \to P_{n-1}/ \Lz \to P_{n-2} \to \cdots \to P_0 \to k \to 0
\end{equation}
where $P_0, \dots, P_{n-1}$ are projective $kG$-modules. Let $\bC_{\z}$ denote the chain complex obtained by truncating both ends of this extension. Splicing $\bC _{\z}$ with itself infinitely many times (in the positive direction), one
obtains a periodic (positive) chain complex $\bC _{\z} ^{\infty}$ and the tensor product of these complexes $\otimes _i \bC ^{\infty} _{\z_i}$ over a set of cohomology classes $\{ \z_1 , \dots, \z_r \}$ is called a multiple complex.
It is shown in \cite{Paper:BensonCarlson87Complexity} that a multiple complex gives a projective resolution of $k$ as a $kG$-module if and only if $\{\z_1,\dots,\z_r \}$ is a system of parameters for the cohomology ring $H^*(G, k)$.

In \cite{Paper:Carlson1987Products}, Carlson studies a complex dual to $\bC_{\z}$. Let $\bD_{\z}$ denote the chain complex which is obtained by first taking the dual of $\bC_{\z}$ and then shifting it to the left so that $(P_{n-1}/ \Lz )^*$ is at
dimension zero. In a similar way,  we can form an infinite complex $\bD_{\z}^{\infty}$ by splicing $\bD _{\z}$ with itself infinitely many times in the positive direction. Note that the complex $\bD _{\z} ^{\infty} $ has an
augmentation map $\e : \bD _{\z} ^{\infty} \to k$ which comes from the dual of the map on the left side of the extension (\ref{eqn:extension}). Carlson proves the following:

\begin{thm}[Carlson \cite{Paper:Carlson1987Products}]\label{thm:Carlsonmainthm} Let $\z \in H^n (G,k)$ be a nonzero cohomology class of degree $n$ where $n \geq1$. Then, there is a chain map $\phi : \bD _{\z} ^{\infty}\to \bD _{\z} ^{\infty}\ot \bD _{\z} ^{\infty}$ which satisfies $(\id\ot \e )\phi=\id$ and $(\e \ot \id)\phi=\id$ if and only if $\z$ is productive.
\end{thm}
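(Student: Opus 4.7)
The plan is to translate the existence of the diagonal map $\phi$ into an obstruction-theoretic statement and then to identify the single relevant obstruction with the multiplication-by-$\z$ action on $\Ext^*_{kG}(\Lz,\Lz)$. The whole argument will rest on the fact that $kG$ is a Frobenius algebra, so duals of projective modules remain projective and $\bD_\z^\infty$ is, away from the splicing positions, a complex of projectives with augmentation $\e:\bD_\z^\infty\to k$. At each splicing position the module is $(P_{n-1}/\Lz)^*$, and dualising the left end $0\to k\to P_{n-1}/\Lz\to \Omega^{n-1}k\to 0$ of the extension representing $\z$ exhibits $\Lz$ as a syzygy adjacent to this non-projective piece, so $\Ext^*_{kG}(\Lz,\Lz)$ can be read off from truncations of $\bD_\z^\infty$ in a way compatible with cup products.

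For the direction ``$\phi$ exists $\Rightarrow$ $\z$ productive'', I would take any $\eta\in\Ext^m_{kG}(\Lz,\Lz)$, represent it by a chain endomorphism of a suitable truncation of $\bD_\z^\infty$, and realise the product $\z\cdot\eta$ by pairing $\eta$ with one tensor factor of $\phi$ and collapsing the other factor by $\e$. The strict counital identities $(\id\ot\e)\phi=\id$ and $(\e\ot\id)\phi=\id$ then exhibit $\z\cdot\eta$ as a chain boundary, forcing $\z\cdot\eta=0$ in Ext. The fact that the two identities are strict, and not merely up to homotopy, is what produces a chain equality rather than a homotopy, so that cohomological vanishing is immediate.

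For the converse, I would construct $\phi$ period by period. Having built $\phi$ on the first $q$ periods of $\bD_\z^\infty$ compatibly with both counital identities, extending $\phi$ across the $(q{+}1)$st splicing joint is a lifting problem whose obstruction is a chain-homotopy class identifiable with a cup product of $\z$ with a class in $\Ext^{qn}_{kG}(\Lz,\Lz)$. Productivity annihilates this class, allowing the extension; the strict counital identities on the new piece are then re-enforced by a standard rectification using the acyclicity of the projective part of $\bD_\z^\infty\ot\bD_\z^\infty$.

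The main obstacle will be making the identification in the inductive step genuinely precise: showing that the obstruction really equals multiplication by $\z$ on $\Ext^*_{kG}(\Lz,\Lz)$, and that insisting on strict counital identities does not introduce additional obstructions beyond this single class. This requires careful bookkeeping of the K\"unneth filtration on $\bD_\z^\infty\ot\bD_\z^\infty$ and of the way the two periodicities interact. Should the direct identification prove cumbersome, the Postnikov decomposition of $\bPz\ot\bPz$ developed later in this paper offers a cleaner organisation of the single obstruction, and that reorganisation should transfer with only cosmetic changes to the $\bD_\z^\infty$ formulation used by Carlson.
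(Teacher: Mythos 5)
The theorem you are asked to prove is Theorem~\ref{thm:Carlsonmainthm}, which the paper explicitly attributes to Carlson and does \emph{not} prove itself; the proof lives in \cite{Paper:Carlson1987Products}. What the paper actually proves is the reformulation Theorem~\ref{thm:main1}, using $\bPz$ in place of $\bD_\z^\infty$: one direction is lifted from Carlson's Proposition~2.3 through Propositions~\ref{lem:splitting for Uz} and~\ref{prop:slitting for Cz} and a projective-resolution argument, and the other direction shows that a diagonal $\D$ gives a nullhomotopy of the chain map $\mu_\z$, whose homotopy class on a $2n$-truncation is multiplication by $\z$ on $\Ext^*_{kG}(\Lz,\Lz)$. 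So your proposal is best read as an attempted reconstruction of Carlson's original argument, and should be judged against that.

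There is a genuine gap in your ``$\phi\Rightarrow$ productive'' direction, and it is not merely a matter of unfinished bookkeeping. You propose to ``realise $\z\cdot\eta$ by pairing $\eta$ with one tensor factor of $\phi$ and collapsing the other factor by $\e$,'' and then to invoke the strict counital identities to exhibit $\z\cdot\eta$ as a chain boundary. But if $\eta$ is a degree-$m$ chain endomorphism of (a truncation of) $\bD_\z^\infty$, then under the identification $\bD_\z^\infty\ot k\cong\bD_\z^\infty$ one has $(\eta\ot\e)\phi=\eta\circ(\id\ot\e)\phi=\eta$, and similarly on the other side. The counital identity simply returns $\eta$; no factor of $\z$ appears, so this computation says nothing about $\z\cdot\eta$. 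The cup product with $\z$ has to come from the structure of $\bD_\z^\infty$ itself---the passage from one period to the next, or equivalently the $k$-invariant of the underlying sphere-like complex---not from the counit. Compare the paper's own mechanism in the backward direction of Theorem~\ref{thm:main1}: there $\D$ is interpreted as a \emph{splitting} of the extension $0\to\Sig^{n-1}\bP\ot\bPz\to\bPz\ot\bPz\to\bP\ot\bPz\to 0$, so that its extension class $\mu_\z$ is nullhomotopic, and then one identifies $\mu_\z$ (after truncation) with $\z$ acting on $\Ext^*_{kG}(\Lz,\Lz)$. That splitting/extension-class argument, or an analogue of it at the splice joint in $\bD_\z^\infty$, is the ingredient your sketch is missing. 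Relatedly, your emphasis on the strict (rather than up-to-homotopy) counital identities being ``what produces a chain equality'' is misplaced: the paper's Theorem~\ref{thm:main1} derives productivity from a diagonal satisfying only \emph{homotopy} counit conditions, so strictness is not the operative point.

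Your ``productive $\Rightarrow\phi$'' direction has the right general shape---a period-by-period construction of $\phi$, with a single obstruction at each splice joint that you want to identify with a cup product killed by productivity. That matches the spirit of Carlson's construction. But you yourself flag that identifying the obstruction with multiplication by $\z$ on $\Ext^*_{kG}(\Lz,\Lz)$, and the compatibility of successive choices with both counits, is the hard part and is left unverified. The paper sidesteps exactly this by taking Carlson's Proposition~\ref{lem:splitting for Uz} as input (an explicit module-level splitting $\phi:U_\z\to U_\z\ot U_\z$) and then extending over the projective degrees, where no further obstructions arise since $\bD_\z\ot\bD_\z$ has no homology in the range $0<i<n-1$. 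Your observation that the Postnikov decomposition of $\bPz\ot\bPz$ gives a cleaner organisation is correct, but note that the paper deploys it to prove Theorem~\ref{thm:main2} about $\bPz$, not to prove Carlson's Theorem~\ref{thm:Carlsonmainthm}; transferring it to $\bD_\z^\infty$ is a reasonable idea but is not ``cosmetic,'' since $\bD_\z^\infty$ is not a projective complex and the lifting lemmas of Section~\ref{sect:Extensions} do not apply to it directly.
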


Note that if there is a chain map $\phi$ as in the above theorem, then the map induced by $\phi$ on cohomology defines a product with unity on the cohomology of $\bD_{\z}^{\infty}$. This was the main motivation for Carlson to study the productive elements since, when there is a product structure on $H^*(\bD_{\z}^{\infty})$, it is easier to calculate the differentials in the hypercohomology spectral sequence for multiple complexes.

In this paper, we consider the chain complex $\bPz$ of projective $kG$-modules whose homology is the same as the homology of an $(n-1)$-sphere and whose $k$-invariant
is $\zeta$ under a certain polarization. Alternatively, one can define $\bP (\z)$ as follows: Let $\bP$ be a projective resolution of $k$ as a $kG$-module and $\Sigma ^{n-1} \bP$ denote the chain complex where $(\Sig ^{n-1}\bP)_i=\bP _{i-n+1}$ and $\bd=(-1)^{n-1} \bd $. Then, $\bPz$ is defined as the chain complex that fits into an extension of the form
\begin{equation}
0 \to \Sigma ^{n-1} \bP \to  \bP (\z) \to \bP \to 0
\end{equation}
whose extension class is $\z$ under the identification $[\bP, \Sig ^n \bP]=\Ext ^n _{kG} (k,k)$ (see Section \ref{sect:Extensions} for more details). Note that the complex $\bP (\z)$ has an augmentation map $\epsilon : \bP (\z)\to k$ induced from the augmentation map of $\bP$. Our first result is the following:

\begin{thm}\label{thm:main1} Let $\z \in H^n (G,k)$ be
a nonzero cohomology class of degree $n$ where $n \geq 1$ and let $\bP(\z)$ denote the chain complex of projective $kG$-modules which has the homology of an $(n-1)$-sphere and whose $k$-invariant is $\z$. Then, $\z$ is productive if and only if there is a chain map $\Delta: \bPz \to \bPz \otimes \bPz$ which satisfies $(\id \otimes \epsilon)\Delta\simeq \id$ and $(\epsilon \otimes \id)\Delta \simeq \id$.
\end{thm}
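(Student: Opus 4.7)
\medskip

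\noindent\textbf{Proof plan.}

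The plan is to recast the problem as a lifting problem in the derived category $D(kG)$ and then run obstruction theory along the Postnikov tower of $\bPz \otimes \bPz$. The two counit conditions on $\D$ say exactly that, in $D(kG)$, the map $\D$ lifts the diagonal $\delta = (\id,\id):\bPz \to \bPz \oplus \bPz$ through the counit pair
$$ p := (\e \otimes \id,\ \id \otimes \e):\bPz \otimes \bPz \longrightarrow \bPz \oplus \bPz. $$
Completing $p$ to a distinguished triangle $F \to \bPz \otimes \bPz \to \bPz \oplus \bPz \to F[1]$, a lift of $\delta$ exists if and only if the composite $\bPz \xrightarrow{\delta} \bPz \oplus \bPz \to F[1]$ vanishes in $\Hom_{D(kG)}(\bPz, F[1])$. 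Call this single class $\obs(\z)$; it is the sole obstruction.

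Since $\bPz$ has homology $k$ concentrated in degrees $0$ and $n-1$ with $k$-invariant $\z$, the K\"unneth theorem gives $H_*(\bPz \otimes \bPz) = k \oplus k^{\oplus 2}[n-1] \oplus k[2n-2]$, and the Postnikov tower of $\bPz \otimes \bPz$ has three stages whose $k$-invariants are expressible in terms of $\z$. On homology, $p_*$ is the diagonal inclusion in degree $0$, an isomorphism on the middle summand $k^{\oplus 2}[n-1]$, and zero in degree $2n-2$, so $F$ is concentrated in two degrees. Pushing $\obs(\z)$ through this Postnikov analysis should land it in a concrete $\Ext$-group; my expectation is that it becomes a specific element of $\Ext^n_{kG}(L_\z, L_\z)$.

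The crux is the final identification: the aim is to show that $\obs(\z)$ equals the image of $\z$ under the natural ring map $H^*(G,k) \to \Ext^*_{kG}(L_\z, L_\z)$, namely the element $\z \cdot \id_{L_\z}$. By definition $\z$ is productive exactly when this ring map annihilates $\z$, equivalently when $\z \cdot \id_{L_\z} = 0$, so once this identification is established, both directions of the theorem follow at once. To carry it out I would fix a graded-module splitting $\bPz = \bP \oplus \Sig^{n-1}\bP$ and use that $\bPz$ internally encodes $L_\z$ via the defining sequence $0 \to L_\z \to \Om^n k \to k \to 0$. The main difficulty is bookkeeping: writing out $p$, its fiber $F$, and the obstruction in terms of maps between the four pieces of $\bPz \otimes \bPz$; tracking signs from the shift $\Sig^{n-1}$; and choosing compatible chain homotopies so that the two counit conditions produce a single coherent cycle representative of $\obs(\z)$ that one can recognise as the Yoneda product $\z \cdot \id_{L_\z}$.
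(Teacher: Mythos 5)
Your reformulation of the two counit conditions as a single lifting problem along $p=(\e\ot\id,\id\ot\e)$ is correct and elegant, and the observation that the lift exists iff a single class in $\Hom_{D(kG)}(\bPz, F[1])$ vanishes is sound triangulated-category bookkeeping. But the proposal stops exactly where the theorem begins: the claim that this obstruction class lives in $\Ext^n_{kG}(\Lz,\Lz)$ and equals $\z\cdot\id_{\Lz}$ is asserted as an ``expectation'' and an ``aim,'' not proven, and it is far from automatic. The group $\Hom_{D(kG)}(\bPz,F[1])$ is a hypercohomology group: $\bPz$ has homology $k$ in degrees $0$ and $n-1$, and $F[1]$ has homology $k$ in degrees $0$ and $2n-1$, so this Hom-group carries a filtration whose graded pieces include several $\Ext^*_{kG}(k,k)$ groups, not a single $\Ext^n_{kG}(\Lz,\Lz)$. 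The identification with $\Ext^n_{kG}(\Lz,\Lz)$ requires showing both that the obstruction dies in the lower filtration stages and that the relevant piece of $F$ ``sees'' $\Lz$ --- which is precisely the content of the truncation argument (passing to $\Gamma_{2n}$) that the paper carries out, and which you have not attempted. Until that is done there is no proof.

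It is also worth contrasting the routes. The paper proves the two implications by quite different means. For ``productive $\Rightarrow$ $\D$ exists'' it reduces to Carlson's Theorem~\ref{thm:Carlsonmainthm} by establishing that $\bPz$ is a projective resolution of $\bC_\z$ and hence of $\bD_\z$ (Proposition~\ref{prop:Projective resolution}, Corollary~\ref{cor:Projective resolution}), then lifting Carlson's map $\phi'$ on $\bD_\z$ through the resolution. For the converse it observes that $\D$ gives a homotopy splitting of the tensored sequence $0\to\Sig^{n-1}\bP\ot\bPz\to\bPz\ot\bPz\to\bP\ot\bPz\to 0$, so the associated class $\mu_\z\simeq 0$, and then identifies $\mu_\z$ with $\z\cdot\id_{\Lz}$ by truncating at degree $2n$. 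Your plan would, if completed, yield a self-contained two-way proof not depending on Carlson's theorem on $U_\z$, which is genuinely more than the paper does for this statement; but note that the Postnikov/obstruction machinery you invoke is exactly what the paper reserves for the finer Theorem~\ref{thm:main2}, where the obstruction is identified not with $\z\cdot\id_{\Lz}$ but with a Steenrod square. That identification is delicate (involving the transposition homotopy $H$ and semilinearity), which is a warning sign that the ``bookkeeping'' you defer is the substance, not an afterthought.
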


The proof essentially follows from the observation that $\bP (\z)$ is a projective resolution of the chain complex $\bC_{\z}$. Since there is a chain map $\bC_{\z} \to \bD _{\z}$ inducing an isomorphism on homology (see \cite[Proposition 5.2]{Paper:BensonCarlson94ProjectiveResolutions}), this implies that $\bP (\z)$ is also a projective resolution for $\bD _{\z}$. Then Theorem \ref{thm:main1} follows from Theorem \ref{thm:Carlsonmainthm} as a consequence of some standard results on projective resolutions.

Next, we consider the question when there is a chain map $\Delta : \bP (\z) \to \bP (\z) \otimes \bP (\z)$ satisfying $(\id \otimes \epsilon)\Delta \simeq \id$ and $(\epsilon \otimes \id)\Delta \simeq \id $. We answer this question by considering the Postnikov decomposition of $\bP (\z) \otimes \bP (\z)$ (see Dold \cite{Paper:Dold60ZurHomotopie}). We observe that $\bPz \ot \bPz$ fits into an extension of the form
\begin{equation}\label{eqn:postnikov1}
0 \to \Sig ^{n-1} \bP \otimes \Sig ^{n-1} \bP \to \bP (\z)
\otimes \bP (\z) \maprt{\pi} \bP (\z \oplus \z) \to 0
\end{equation}
where $\bP(\z \oplus \z)$ is defined as the chain complex that fits into an extension
\begin{equation}\label{eqn:postnikov2}
0 \to (\Sig ^{n-1} \bP \otimes \bP) \oplus (\bP \otimes \Sig^{n-1} \bP) \to \bP (\z \oplus \z ) \to \bP \otimes \bP \to 0
\end{equation}
with extension class $\theta =( \z \times \id , \id \times \z )$. It turns out that one can always find a chain map $\psi: \bP (\z) \to \bP (\z \oplus \z)$
that commutes with the diagonal approximation for $\bP$ (see Proposition \ref{prop:firstlifting} for the definition of $\psi$). The existence of a chain map $\D : \bPz \to \bPz \ot \bPz$ satisfying the properties in Theorem \ref{thm:main1} is equivalent to the existence of a lifting $\tilde \psi$ of $\psi$ satisfying $\pi \tilde \psi=\psi$ where $\pi$ is the surjective map given in (\ref{eqn:postnikov1}). There is a unique obstruction for such a lifting and after studying this obstruction, we prove the following:

\begin{thm}\label{thm:main2} Let $k$ be a field of characteristic $p>0$ and let $\z \in H^n (G,k)$ be a nonzero cohomology class of degree $n$ where $n\geq 1$.
If\ $p>2$ and $n$ is even, then $\psi: \bP (\z) \to \bP
(\z\oplus \z)$ lifts to a chain map
$\tilde \psi : \bPz \to \bPz \ot \bPz$ satisfying $\pi \tilde \psi=\psi$. For $p=2$, this lifting exists if and only if $\widetilde{Sq} ^{n-1} \z $ is a multiple of $\z$.
\end{thm}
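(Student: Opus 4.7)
The plan is to isolate and compute the unique obstruction to the lifting. By standard obstruction theory applied to the Postnikov extension (\ref{eqn:postnikov1}), there is a chain map $\tilde\psi : \bPz \to \bPz \ot \bPz$ with $\pi\tilde\psi = \psi$ if and only if a single class $\obs(\psi)$ vanishes in the group of chain homotopy classes $[\bPz,\,\Sig(\Sig^{n-1}\bP \ot \Sig^{n-1}\bP)]$. Since $\bP \ot \bP$ is a projective resolution of $k$, the kernel $\Sig^{n-1}\bP\ot\Sig^{n-1}\bP$ may be identified, up to quasi-isomorphism, with $\Sig^{2n-2}(\bP\ot\bP)$, so the obstruction group becomes $[\bPz,\,\Sig^{2n-1}(\bP\ot\bP)]$. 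Applying $[-,\,\Sig^{2n-1}(\bP\ot\bP)]$ to the defining extension $0 \to \Sig^{n-1}\bP \to \bPz \to \bP \to 0$ yields a long exact sequence whose connecting maps are multiplication by $\z$, by the very definition of $\bPz$; isolating the relevant fragment gives
\[
H^{n-1}(G,k) \maprt{\cdot\z} H^{2n-1}(G,k) \to [\bPz,\,\Sig^{2n-1}(\bP\ot\bP)] \to H^{n}(G,k) \maprt{\cdot\z} H^{2n}(G,k).
\]
The first task is to check, using the construction of $\psi$ in Proposition \ref{prop:firstlifting}, that the image of $\obs(\psi)$ in $H^n(G,k)$ vanishes, so that $\obs(\psi)$ is represented by a well-defined class in $H^{2n-1}(G,k)/\z\cdot H^{n-1}(G,k)$.

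Next I would exploit the symmetry coming from the twist map $T : \bPz \ot \bPz \to \bPz \ot \bPz$ (with Koszul signs). This $T$ preserves the Postnikov extension (\ref{eqn:postnikov1}), can be made compatible with a suitable choice of $\psi$ by interchanging the two copies of $\bPz$, and acts on the kernel $\Sig^{n-1}\bP \ot \Sig^{n-1}\bP$ by the sign $(-1)^{(n-1)^2}$. Consequently $\obs(\psi)$ must be $T$-invariant; for $n$ even, $(n-1)^2$ is odd, so $T$ acts as $-1$ on $\obs(\psi)$, forcing $2\,\obs(\psi)=0$. When $p>2$ this yields $\obs(\psi)=0$, giving the required lifting $\tilde\psi$.

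For $p=2$ the sign argument collapses and the obstruction must be analyzed directly. Here the plan is to build a candidate $\tilde\psi$ degree by degree from a fixed diagonal approximation $\D_\bP : \bP \to \bP \ot \bP$; the failure of this candidate to be a chain map in the top layer of the Postnikov tower produces an explicit cocycle in $\Hom_{kG}(\bPz,\,\Sig^{2n-1}(\bP\ot\bP))$ representing $\obs(\psi)$. The resulting chain-level formula involves exactly the same ingredients as the standard cochain-level definition of the Steenrod operation $\widetilde{Sq}^{n-1}\z$, namely the failure of $\D_\bP$ to commute with the swap, paired with the cocycle representing $\z$. A term-by-term comparison should identify the image of $\obs(\psi)$ in $H^{2n-1}(G,\bbF_2)/\z\cdot H^{n-1}(G,\bbF_2)$ with the image of $\widetilde{Sq}^{n-1}\z$, so that the vanishing of this image translates into the statement that $\widetilde{Sq}^{n-1}\z$ lies in $\z\cdot H^{n-1}$, i.e.\ is a multiple of $\z$.

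The main obstacle I anticipate is this last identification: writing the cocycle for $\obs(\psi)$ in a form that manifestly matches the standard cochain formula for $\widetilde{Sq}^{n-1}\z$, while keeping track of all Koszul signs, the normalization hidden in the tilde on $\widetilde{Sq}$, and the precise ambiguity $\z\cdot H^{n-1}$ that arises from the admissible choices of $\psi$ and of the partial lifting along the Postnikov tower.
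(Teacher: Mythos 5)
The proposal follows the paper's strategy closely and the outline is sound: both identify the unique lifting obstruction as the homotopy class of $\eta\psi$ in $[\bPz,\,\Sig(\Sig^{n-1}\bP\ot\Sig^{n-1}\bP)]$, pass through the weak equivalence $\e\ot\e$ to land in a quotient $H^{2n-1}(G,k)/\z\cdot H^{n-1}(G,k)$, and for $p=2$ recognize the resulting class as $\widetilde{Sq}^{n-1}\z$ via the cochain formula $(\hat\z\ot\hat\z)H\D$, $H$ a homotopy between $\id$ and $T$ on $\bP\ot\bP$. Your long-exact-sequence framing of the reduction is an equivalent repackaging of the paper's use of Lemma \ref{lem:maps from the middle}. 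Where you genuinely diverge is the case $p>2$, $n$ even: instead of the paper's direct computation (choose $H=s(\id-T)$ from the decomposition $\bP\ot\bP=\bD_+\oplus\bD_-$ and observe $(\e\ot\e)(\al\times\al)H\D=0$ on the nose), you argue that $T$ acts by $(-1)^{(n-1)^2}=-1$ on the obstruction group while $\obs(\psi)$ is $T$-invariant, so $2\,\obs(\psi)=0$ and hence $\obs(\psi)=0$. This is a legitimate alternative, but the claim that $\obs(\psi)$ is $T$-invariant ``for a suitable choice of $\psi$'' is not free: one must show that $T'\psi\simeq\psi$ can actually be arranged, and that the reduced obstruction does not depend on the implicit homotopies $H_1,H_2$ (or on the choice of $\D,\D_1,\D_2$). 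That is precisely what the paper's Lemma \ref{lem:Relation between homotopies H1 and H2} and Proposition \ref{pro:lfting for any choice} supply, so the work saved on the $\bD_\pm$ computation largely reappears there. You correctly flag the genuine pressure points for $p=2$ --- matching the obstruction cocycle to the Steenrod cochain formula, the semilinearity hidden in the tilde (the class scales by $\lambda^2$ when $\z$ is scaled by $\lambda$), and pinning down the $\z\cdot H^{n-1}$ ambiguity --- and those are exactly where the paper does its detailed work.
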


Here $\widetilde{Sq} ^{n-1}$ denotes the semilinear extension of the Steenrod square $Sq^{n-1}$. Note that $H^*(G,k)\cong k \ot _{\bF _2} H^*(G,\bF _2 )$ is a $k$-vector space with basis $\{u_i\}$ lying in $H^*(G,\bF_2)$. The semilinear extension of $Sq^{n-1}$ action on $H^*(G,k)$ is defined by $$\widetilde{Sq}^{n-1} (\sum _i
\lambda _i u_i )= \sum _i \lambda_i ^2 Sq ^{n-1} u_i.$$ The reason for taking the semilinear extension instead of the usual Steenrod square action is explained in detail in Section \ref{thm:main2}. We also give an example at the end of Section \ref{thm:main2} to illustrate the importance of this point (see Example \ref{ex:semilinear extension}).

As a corollary of Theorem \ref{thm:main2}, we obtain theorems of Carlson \cite[Theorem
4.1]{Paper:Carlson1987Products} and Langer \cite[Theorem
6.2]{Paper:Langer2010DyerLashoff} on productive elements.

\begin{cor}[Carlson \cite{Paper:Carlson1987Products}, Langer
\cite{Paper:Langer2010DyerLashoff}] Let $k$ be a field of
characteristic $p>0$ and let $\z \in H^n (G,k)$ be a nonzero
cohomology class of degree $n$ where $n \geq 1$. Then, the following holds:
\begin{enumerate} \item If $p>2$ and $n$ is even, then $\z$ is productive.
\item If $p=2$, then $\z$ is productive if and only if $\widetilde{Sq} ^{n-1} \zeta$
is a multiple of $\zeta$.
\end{enumerate}
\end{cor}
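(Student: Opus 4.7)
The plan is to obtain the Corollary as an immediate consequence of Theorems~\ref{thm:main1} and~\ref{thm:main2}, chained through the equivalence of conditions recorded in the paragraph just before Theorem~\ref{thm:main2}.

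First I would apply Theorem~\ref{thm:main1} to reduce productivity of $\z$ to the existence of a chain map $\D:\bPz\to\bPz\ot\bPz$ satisfying $(\id\ot\e)\D\simeq\id$ and $(\e\ot\id)\D\simeq\id$. Second, I would invoke the equivalence noted between Theorems~\ref{thm:main1} and~\ref{thm:main2}: such a $\D$ exists if and only if the canonical map $\psi:\bPz\to\bP(\z\oplus\z)$ admits a chain-map lifting $\tilde\psi:\bPz\to\bPz\ot\bPz$ along the surjection $\pi$ appearing in the Postnikov extension (\ref{eqn:postnikov1}). Third, I would appeal to Theorem~\ref{thm:main2}, which decides exactly when this lifting exists.

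With these three links in place, part (i) is immediate: the hypotheses $p>2$ and $n$ even are exactly those under which Theorem~\ref{thm:main2} unconditionally produces the lifting $\tilde\psi$, hence $\D$, hence the productivity of $\z$. Part (ii) is the combination of both directions of Theorem~\ref{thm:main2} at $p=2$: if $\widetilde{Sq}^{n-1}\z$ is a multiple of $\z$, then the lifting and hence $\D$ exist, so $\z$ is productive; conversely, if $\z$ is productive then $\D$ and therefore $\tilde\psi$ must exist, which forces the obstruction computed in Theorem~\ref{thm:main2} to vanish, and consequently $\widetilde{Sq}^{n-1}\z$ must be a multiple of $\z$.

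There is no substantive obstacle at the level of the Corollary itself; the genuine work has already been absorbed into Theorem~\ref{thm:main2}, where the unique obstruction to lifting $\psi$ through $\pi$ is identified with a Steenrod-square expression on $\z$. The only point worth flagging in the writeup is the semilinearity issue at $p=2$: since $H^*(G,k)$ is obtained by base change from $H^*(G,\bbF_2)$, the obstruction lives naturally in terms of the semilinear extension $\widetilde{Sq}^{n-1}$ rather than the ordinary Steenrod square, as is illustrated in Example~\ref{ex:semilinear extension}, so the statement of part (ii) must be phrased with $\widetilde{Sq}^{n-1}$ and not $Sq^{n-1}$.
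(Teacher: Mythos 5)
Your proposal is correct and follows exactly the route the paper intends: productivity of $\z$ is equivalent to the existence of $\D$ by Theorem~\ref{thm:main1}, the existence of $\D$ is equivalent to the existence of the lifting $\tilde\psi$ by Proposition~\ref{pro:lifting for all} (this is the ``equivalence'' you cite from the introduction; note that $\psi$ is not canonical but depends on choices of homotopies, and Proposition~\ref{pro:lfting for any choice} is what ensures the lifting question is well posed), and Theorem~\ref{thm:main2} then decides when that lifting exists, giving both parts of the Corollary.
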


In the rest of the paper, we  consider the question when a cohomology class $\z \in H^n(G,k)$ annihilates $\Ext ^* _{kG} (L_{\zeta}, k)$. We observe that
this is actually a weaker condition than being productive (see Example \ref{ex:semi but not prod}), so we call such a cohomology class {\it semi-productive}. We relate being semi-productive to Massey products and then to Steenrod squares. We prove the following:

\begin{thm}\label{thm:main3} Let $k$ be a field of characteristic $2$ and let $\z \in H^n (G,k)$
be a nonzero cohomology class of degree $n$ where $n\geq 1$. Then, the following are equivalent: \\ (i) $\z$ is semi-productive. \\ (ii) For every $v\in H^*(G,k)$ satisfying  $v \zeta=0$, the Massey product $\la \z,v, \z \ra \equiv 0$ mod $(\z)$. \\
(iii) For every $v\in H^*(G,k)$ satisfying $v\zeta=0$, the product $v
\widetilde{Sq}^{n-1} \zeta \equiv 0$ mod $(\z)$.
\end{thm}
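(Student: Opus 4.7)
My approach exploits the long exact $\Ext$-sequence attached to the defining short exact sequence of $L_\z$ to translate semi-productivity into a Massey-product condition, and then compares that condition against a Steenrod-square condition using cochain-level cup-$i$ products.

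\smallskip

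\noi \emph{Step 1: reduction to the connecting sequence.} Applying $\Ext_{kG}^*(-,k)$ to $0 \to L_\z \to \Om^n k \to k \to 0$ and using the identification $\Ext^i(\Om^n k,k)\cong \Ext^{i+n}(k,k)$, one obtains a long exact sequence
\[
\cdots \to \Ext^{i+n}(k,k) \maprt{f} \Ext^i(L_\z,k) \maprt{\delta} \Ext^{i+1}(k,k) \maprt{\cdot\, \z} \Ext^{i+n+1}(k,k) \to \cdots
\]
in which the map preceding $f$ is cup product with $\z$. Any class in $\im f$ is automatically killed by $\z$, because $\z\cdot f(w)=f(\z w)$ lies in $f(\im(\cdot\z))=0$. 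Hence $\z$ annihilates $\Ext^*(L_\z,k)$ iff $\z\tilde v=0$ for every lift $\tilde v\in\Ext^i(L_\z,k)$ of every $v\in H^{i+1}(G,k)$ with $v\z=0$.

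\smallskip

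\noi \emph{Step 2: $(i)\Leftrightarrow (ii)$.} For such a lift, $\delta(\z\tilde v)=\z v=0$, so $\z\tilde v=f(w)$ for a class $w\in \Ext^{i+2n}(k,k)$ which is uniquely determined modulo $\im(\cdot\,\z)=(\z)$. Working at the Yoneda/cochain level, the datum $\tilde v$ is precisely a nullhomotopy of $v\cup \z$, and multiplying by $\z$ splices this nullhomotopy against $\z$ on the other side: the resulting cocycle is the standard Massey-product representative. Thus $w\equiv \la \z,v,\z\ra \pmod{(\z)}$ (which absorbs the entire indeterminacy of the triple product), and so $\z\tilde v=0$ for all such $\tilde v$ iff $\la \z,v,\z\ra \equiv 0 \pmod{(\z)}$ for all $v$ with $v\z=0$.

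\smallskip

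\noi \emph{Step 3: $(ii)\Leftrightarrow (iii)$.} Over a field of characteristic $2$, I would identify $\la \z,v,\z\ra$ with $v\cdot\widetilde{Sq}^{n-1}\z$ modulo $(\z)$ via cochain calculations with Steenrod's cup-$i$ products. Choose cocycle representatives $\zeta,\nu$ of $\z,v$, pick $s$ with $ds=\zeta\cup\nu$, and set $t=s+\zeta\cup_1\nu$ (using $d(\zeta\cup_1\nu)=\zeta\cup\nu+\nu\cup\zeta$ in characteristic $2$), so that $dt=\nu\cup\zeta$. The Massey representative $\zeta t+s\zeta$ simplifies, via the cup-$1$ coboundary formula $d(\zeta\cup_1 s)=\zeta s+s\zeta+\zeta\cup_1(\zeta\nu)$ and Hirsch's formula $\zeta\cup_1(\zeta\nu)\equiv (\zeta\cup_1\zeta)\cup\nu+\zeta\cup(\zeta\cup_1\nu)$ mod coboundaries, to
\[
\zeta t+s\zeta \;\equiv\; (\zeta\cup_1\zeta)\cup\nu \pmod{\text{coboundaries}+(\z)}.
\]
Since $\zeta\cup_1\zeta$ represents $\widetilde{Sq}^{n-1}\z$ (the off-diagonal cross terms in $\zeta=\sum\lambda_i u_i$ are of the form $\lambda_i\lambda_j(u_i\cup_1 u_j+u_j\cup_1 u_i)=\lambda_i\lambda_j\, d(u_i\cup_2 u_j)$, hence exact, leaving only the squared-coefficient diagonal, which forces the semilinear extension), we conclude $\la \z,v,\z\ra \equiv v\cdot \widetilde{Sq}^{n-1}\z \pmod{(\z)}$.

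\smallskip

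\noi \emph{Main obstacle.} The delicate part is the cochain bookkeeping in Step 3: choosing $s$ and $t$ compatibly, keeping track of the Hirsch-formula error terms and checking that each one is either a coboundary or carries a factor of $\z$, and verifying the semilinearity of $\widetilde{Sq}^{n-1}$ from the cup-$2$ exactness of cross terms. Once these identities are clean, Steps 1--3 assemble into the three equivalences.
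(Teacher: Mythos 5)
Your proof of $(i)\Leftrightarrow(ii)$ follows essentially the same route as the paper: use the long exact $\Ext$-sequence coming from $0 \to L_\zeta \to \Omega^n k \to k \to 0$, observe that semi-productivity is equivalent to the vanishing of a well-defined obstruction in $\Ext^{i+2n}(k,k)/(\zeta)$ for each $v$ with $v\zeta=0$, and then identify this obstruction with a Yoneda/Massey representative. The paper packages this as a map $\mu:\Ann^{i+1}(\zeta)\to \Ext^{i+2n}(k,k)/(\zeta)$ and two small lemmas (including an application of Lemma \ref{lem:maps from the middle}); you do the same diagram chase in slightly more cochain-centric language, but the content coincides. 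Where you genuinely diverge is $(ii)\Leftrightarrow(iii)$: the paper simply invokes Hirsch's theorem $\langle\zeta,v,\zeta\rangle \equiv v\,Sq^{n-1}\zeta \pmod{(\zeta)}$, while you supply a direct cochain-level proof using the $\cup_1$ and $\cup_2$ coboundary formulas, the Hirsch distributivity law for $\cup_1$ over $\cup$, and an explicit verification that $\zeta\cup_1\zeta$ represents the semilinear operation $\widetilde{Sq}^{n-1}\zeta$ (the off-diagonal cross terms being exact via $\cup_2$). This is more self-contained and, pleasantly, it makes the appearance of the semilinear Steenrod square transparent from first principles rather than by appeal to the external theorem; the trade-off is the delicate cup-$i$ bookkeeping you flag yourself, and the need to fix a compatible system of $\cup_i$ operations on $kG$-cochains (which the paper implicitly avoids by citing Hirsch). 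Your calculation is consistent with what the paper would produce if it unwound Hirsch's formula, so I regard the step as sound, though for a polished write-up the specific Hirsch identity you use ($\zeta\cup_1(\zeta\nu)\equiv(\zeta\cup_1\zeta)\nu+\zeta(\zeta\cup_1\nu)$ mod coboundaries, which is the ``wrong-side'' distributivity and holds only up to a coboundary when all three inputs are cocycles) deserves an explicit citation or proof.
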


The equivalence of the last two statements follows from a theorem of Hirsch \cite{Paper:Hirsch55Quelques} which says that if $X$ is a simplicial complex, then for every $v,\z \in H^* (X, \bF _2)$, the equation $$\la \z , v, \z \ra \equiv v Sq^{n-1} \z \ \  \mod (\z)$$ holds. At the end of the paper, we give an example of a cohomology class which
is not semi-productive. We also provide an example of a semi-productive element which is not productive.

Throughout the paper all modules are finitely-generated. Whenever there is more than one place to find a theorem and its proof, we refer to the original paper although it may not be the easiest source to find. Many of the old results that we quote in the paper can also be found in the books by Benson \cite{Book:Benson95Rep&CohI}, \cite{Book:Benson95Rep&CohII} and Carlson \cite{Book:Carlson1996Modules}, \cite{Book:carlson03CohofGroups}.

\ack{I thank Martin Langer for many helpful conversations on the topic and for directing me to Hirsch's work. I also thank him for providing me an example
of a cohomology class which is not semi-productive (Example \ref{ex:MLangerexample}).} I also thank the referee for his/her corrections and suggestions on the paper.

\section{Preliminaries on chain complexes}
\label{sect:preliminaries}

In this section we introduce our notation for chain complexes and state some well-known results about hypercohomology of chain complexes. For more details on this material, we refer the reader to 
\cite[Sections 2.3 and 2.7]{Book:Benson95Rep&CohI} and
\cite[Section 2]{Paper:BensonCarlson94ProjectiveResolutions}.

Let $G$ be a finite group and $k$ be a field of characteristic $p>0$. Throughout the paper, whenever we say $\bC$ is a chain complex, we always mean that $\bC$ is a chain complex of finitely-generated $kG$-modules and it is bounded from below, i.e., there exists an $N$ such that $\bC _i=0$ for $i<N$. In fact, almost all of our chain complexes are positive, i.e., $\bC_i=0$ for $i<0$. 

Let $\bC$ and $\bD$ be two chain complexes. Then, we denote by $\bC \otimes \bD$ the chain complex $$(\bC \otimes \bD)_n= \bigoplus_{i+j=n} \bC_i \otimes _k \bD _j$$ whose differential is defined by $\bd(x\otimes y)=\bd (x) \otimes y + (-1) ^i x \otimes \bd (y)$ for every $x \in\bC_i$ and $y\in \bD_j$. The $G$-action on $\bC \otimes \bD$ is given by the diagonal action. We define the Hom-complex $\bHom (\bC, \bD)$ as the chain complex
$$\bHom (\bC, \bD)_n =\prod _{i+n=j} \Hom _{kG} (\bC _i, \bD _j)$$ with differential $\bd(f) (x)= \bd (f(x))-(-1) ^n f (\bd (x))$ for $f \in \Hom _{kG} (\bC _i, \bD _{i+n} )$ and $x\in \bC _{i+1}$. If $f \in \bHom (\bC, \bD)_n $, then we say $f$ is of degree $n$. A map $f:\bC \to \bD$ of degree zero is called a chain map if $\bd(f)=\bd f - f\bd=0$. We say two chain maps $f,g : \bC \to \bD$ are homotopic if there is a degree one map $H: \bC \to \bD$ such that $\bd H +H \bd=f-g$. In this case, we write $f \simeq g$. We denote by $[\bC, \bD]$ the group of homotopy classes of chain maps $f: \bC \to \bD$. Note that $[\bC, \bD]$ is the same as $0$-th homology group of the
Hom-complex $\bHom (\bC, \bD)$.

A chain map $f:\bC \to \bD$ is called a homotopy equivalence if there exists a chain map $g: \bD \to \bC$ such that $fg\simeq \id$ and $gf\simeq \id$. If a chain map $f$ induces isomorphism on homology, then we say $f$ is a weak (homology) equivalence. For each integer $n$, we denote by $\Sigma ^n \bC$, the chain complex $(\Sigma ^n \bC)_i=\bC _{i-n}$ with differential $(\Sigma^n \bd) _i=(-1)^n \bd_{i-n}$. Note that $\bHom (\bC, \Sig ^n \bD)=\Sig ^n \bHom (\bC,\bD)$. So we have
\begin{equation}\label{eqn:identifications}
[\bC, \Sig ^n \bD]=H_0 (\Sig ^n \bHom (\bC,\bD))=H_{-n} (\bHom(\bC, \bD))=H^n (\bHom (\bC, \bD ))
\end{equation}
where the last equality comes from the usual convention of interpreting a chain complex as a cochain complex by taking $C^n=C_{-n}$ and $\delta ^n =\bd _{-n}$.

\subsection{Hypercohomology}
Given a chain complex $\bC$, a projective resolution of $\bC$ is defined as a chain complex $\bP$ (bounded from below) of projective $kG$-modules together with a chain map $\bP \to \bC$ which induces an isomorphism on homology. Given two chain complexes $\bC$ and $\bD$, the $n$-th ext-group of $\bC$ and $\bD$ is defined as $$ \Ext ^n _{kG} (\bC, \bD):=H^n (\bHom (\bP, \bD))$$ where $\bP \to \bC$ is a projective resolution of $\bC$. The ext-group $\Ext ^n_{kG} (\bC, \bD)$ is called the $n$-th hypercohomology group of $\bC$ and $\bD$. Using the identification given in $(\ref{eqn:identifications})$, we can also take
\begin{equation*}
\Ext ^n _{kG} (\bC, \bD)=[\bP, \Sig ^n \bD]
\end{equation*}
where $\bP$ is a projective resolution of $\bC$. The following is a useful observation:

\begin{lem}\label{lem:usefulobservation}
If $f: \bC \to \bC'$ and $g:\bD \to \bD'$ are weak equivalences, then the induced map
$$(f^*, g_*) : \Ext ^* _{kG} (\bC ', \bD) \to \Ext ^* _{kG} (\bC, \bD ')$$ is an isomorphism.
\end{lem}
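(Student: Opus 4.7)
The plan is to factor the induced map as $(f^*, g_*) = g_* \circ f^*$ and show each factor is an isomorphism separately. That is, I would prove (a) $f^* : \Ext^*_{kG}(\bC', \bD) \to \Ext^*_{kG}(\bC, \bD)$ is an isomorphism whenever $f$ is a weak equivalence, and (b) $g_* : \Ext^*_{kG}(\bC, \bD) \to \Ext^*_{kG}(\bC, \bD')$ is an isomorphism whenever $g$ is a weak equivalence. Composing these gives the lemma.

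For (a) I would exploit the flexibility in choosing a projective resolution. Pick any projective resolution $\bP \to \bC$. Since $f$ is a weak equivalence, the composite $\bP \to \bC \to \bC'$ is again a weak equivalence from a bounded-below complex of projectives, hence is a projective resolution of $\bC'$. Using this same $\bP$ to compute both sides via $\Ext^n_{kG}(-,\bD)=[\bP, \Sig^n \bD]$, the map $f^*$ is induced by the identity of $\bP$ and is therefore tautologically an isomorphism.

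For (b) I would use the mapping cone: form the short exact sequence
\begin{equation*}
0 \to \bD' \to \mathrm{cone}(g) \to \Sig \bD \to 0,
\end{equation*}
which is termwise split. Applying $\bHom(\bP,-)$ for $\bP$ a projective resolution of $\bC$ preserves this short exact sequence of complexes, and the resulting long exact sequence in homology gives a long exact sequence of $\Ext$-groups in which the connecting homomorphism agrees with $g_*$ up to sign. Since $g$ is a weak equivalence, $\mathrm{cone}(g)$ is acyclic, so it suffices to show that $\Ext^n_{kG}(\bC, \bE) = [\bP, \Sig^n \bE] = 0$ for every acyclic bounded-below complex $\bE$.

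The main obstacle is this last vanishing, which is where projectivity of $\bP$ actually enters. Given a chain map $\phi : \bP \to \bE$, I would construct a null-homotopy $H$ inductively from the bottom up. Assuming $\bP_i = 0$ for $i < N$, the map $\phi_N$ is automatically a cycle, and since $\bE$ is acyclic the surjection $\bd_{N+1} : \bE_{N+1} \twoheadrightarrow \ker \bd_N$ lifts $\phi_N$ along the projective module $\bP_N$ to a map $H_N : \bP_N \to \bE_{N+1}$ with $\bd H_N = \phi_N$. At the inductive step, having built $H_{N}, \ldots, H_{i-1}$ satisfying $\bd H_j + H_{j-1} \bd = \phi_j$, the discrepancy $\phi_i - H_{i-1}\bd : \bP_i \to \bE_i$ is a cycle by a routine computation, and acyclicity of $\bE$ together with projectivity of $\bP_i$ produces $H_i : \bP_i \to \bE_{i+1}$ with $\bd H_i = \phi_i - H_{i-1}\bd$. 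This completes both the vanishing and the proof.
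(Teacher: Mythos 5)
Your proof is correct, but it takes a genuinely different route from the paper's. The paper handles $f^*$ by lifting $f$ to a chain map $\tilde f:\bP\to\bP'$ between projective resolutions and citing Brown (Chp.~I, Thm.~8.4) to conclude $\tilde f$ is a homotopy equivalence; it then handles $g_*$ by directly citing Brown (Chp.~I, Thm.~8.5), which says $\bHom(\bP,-)$ from a bounded-below projective complex carries weak equivalences to weak equivalences. You instead sidestep the lift entirely for $f^*$, observing that $\bP\to\bC\to\bC'$ is itself a projective resolution of $\bC'$ so that, computing both Ext-groups with this same $\bP$, the map $f^*$ becomes the identity; and for $g_*$ you unpack the cited theorem by running the mapping-cone long exact sequence and reducing to the vanishing $[\bP,\Sig^n\bE]=0$ for $\bE$ acyclic, which you establish by the standard inductive construction of a null-homotopy. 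Your version is more self-contained --- it reproves the needed facts rather than citing them --- whereas the paper's is shorter by reference; the technical kernel (a chain map from a bounded-below projective complex into an acyclic complex is null-homotopic) is the same and is precisely what underlies the cited theorems. One small elision worth noting: your reduction for $f^*$ implicitly invokes the fact that $\Ext^*_{kG}(\bC',\bD)$ is independent of the chosen projective resolution of $\bC'$ up to canonical isomorphism compatible with induced maps; this is standard and is what makes the ``use the same $\bP$'' trick legitimate, but it is the same kind of fact you are otherwise reproving by hand, so you may as well acknowledge it.
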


\begin{proof} Let $\bP$ and $\bP'$ be projective resolutions of $\bC$ and $\bC '$, respectively. Then, $f$ lifts to a chain map $\tilde f: \bP \to \bP '$ which is also a weak equivalence. Since $\bP$ and $\bP'$ are projective complexes which are bounded from below, 
$\tilde f$ is a homotopy equivalence (see 
\cite[Chp I, Thm 8.4]{Book:Brown1982CohGrps}). This 
induces a homotopy equivalence 
$$f^*:  \bHom (\bP ' , \Sig ^n \bD) \maprt{\simeq} \bHom (\bP, \Sig ^n \bD )$$ and hence an isomorphism on homology. Since $\Sig ^n g : \Sig ^n \bD \to \Sig ^n \bD'$ is a weak equivalence, the induced map
$$ g_*: \bHom ( \bP, \Sig ^n \bD ) \maprt{} \bHom (\bP, \Sig ^n \bD' )$$ is also a weak equivalence (see \cite[Chp I, Thm 8.5]{Book:Brown1982CohGrps}). Combining these, we get the desired isomorphism on ext-groups.
\end{proof}

When $\bC$ is a chain complex of projective $kG$-modules and $M$ is a $kG$-module, the $n$-th cohomology of the cochain complex $\Hom _{kG} (\bC, M)$ is often denoted by $H^*(\bC, M)$. Note that the cohomology group $H^n (\bC, M)$ is the same as the hypercohomology group $\Ext^n _{kG}(\bC, \bM)$, where $\bM$ is the chain complex with $M$ at dimension zero and zero everywhere else. So, using Lemma
\ref{lem:usefulobservation}, we can identify the cohomology group $H^*(\bC, M)$ with the group of homotopy classes $[\bC, \Sig ^n \bP(M)]$. In particular, the group cohomology $H^*(G,k)$ can be identified with the group $[\bP, \Sig ^n \bP]$. More generally, for any $kG$-modules $N$ and $M$, we can identify the ext-group $\Ext ^n _{kG} (N,
M)$ with the group of homotopy classes $[\bP (N), \Sigma ^n \bP (M)]$ where $\bP (N)$ and $\bP (M)$ are projective resolutions of $N$ and $M$, respectively.

\subsection{Products in cohomology}
The algebra structure of $H^*(G,k)$ and the $H^*(G,k)$-module
structure of $H^* (\bC, k)$ can be defined in terms of composition of chain maps using the above identifications. Given $x \in H^n (G, k)$ and $y\in H^m(G,k)$, let $\hat x: \bP\to \Sig ^n \bP$ and $\hat y: \bP \to \Sig ^m \bP$ be chain maps that represent $x$ and $y$, respectively. Then, the cup product $xy \in H^{n+m} (G,k) $ is defined as the cohomology class represented by the composition
\begin{equation*}
\begin{CD} \bP @>{\hat y}>> \Sig ^m \bP @>{\Sig^m \hat
x}>> \Sig^{m+n} \bP \\
\end{CD}
\end{equation*}
Similarly, for $x\in H^n(G, k)$ and $u \in H^m (\bC , k)$, one can define the cup product $xu$ as a composition of associated chain maps.

Alternatively, one can define products in hypercohomology using (cross) products of chain maps. Given two maps $f\in \bHom( \bC, \bD)$ and $g\in \bHom (\bE ,\bF)$, the product $f\times g \in \bHom(\bC \otimes \bE , \bD \otimes \bF)$ is defined by $$(f \times g)(x \otimes y)=(-1)^{\deg (x) \deg (g)} f(x) \otimes g(y).$$ Note that here we use the following well-known sign convention: multiply an expression by $(-1)^{nm}$ whenever two terms with degrees $n$ and $m$ are swapped. In particular, we have
$$(f'\times g')\circ (f \times g)=(-1)^{\deg (g') \deg(f)} (f' \circ f)\times (g' \circ g).$$ A list of similar formulas can be found in \cite[Section 2]{Paper:Siegel97CohSplitExt}.

Using the cross product, one can express the cup product $xy$ as a composition $$ \bP \maprt{\D} \bP \otimes \bP \maprt{\hat x \times \hat y} \Sig ^n \bP \otimes \Sig^m \bP \maprt{\mu} \Sig^{m+n} \bP $$ where $\D :\bP \to \bP \otimes \bP$ is a chain map covering the diagonal map $k \to k \otimes k$ defined by $a \to a \otimes 1$ and $\mu$ is a chain map covering the multiplication map $k \otimes k \to k$ defined by $\mu (a \otimes b)=ab$. For more details on the products in hypercohomology, we refer the reader to \cite[Section 3.2]{Book:Benson95Rep&CohI}.

\section{Extensions of projective chain complexes}
\label{sect:Extensions}

Let $G$ be a finite group and $k$ be a field of characteristic
$p>0$. As in the previous section, we only consider chain complexes of finitely-generated $kG$-modules which are bounded from below. In this section, we also assume that all the chain complexes are projective, i.e., they are chain complexes of projective $kG$-modules.

Given an extension of (projective) chain complexes $0 \to \bA \to \bB \to \bC \to 0$, associated to it, there is an extension class $\alpha \in [\bC , \Sig \bA]$ defined as follows: Given an extension, we can choose $kG$-module splittings for each $n$ and assume that $\bB_n =\bA_n \oplus \bC_n$ for all $n$. Then, the differential $\bd^B$ is of the form
$$\bd ^B=\left[\begin{matrix}\bd ^A & \alpha \\ 0 & \bd ^C
\end{matrix}\right]$$ where $\alpha: \bC \to \Sig \bA$
is a chain map. The fact that $\al$ is a chain map follows from the identity $(\bd ^B )^2=0$ which gives $-\bd^A\alpha=\alpha \bd^C$. In the usual way one can define an equivalence relation for extensions and then obtain a bijective correspondence between the group of equivalence classes of extensions of the form $0 \to \bA \to \bB \to \bC \to 0$ and the group $[\bC , \Sig \bA]$, of homotopy classes of chain maps
$\bC \to \Sig \bA$. We leave the details to the reader.

One important example of an extension that we deal with in this paper is the following:

\begin{ex} Let $\zeta \in H^n (G,k)$ be a nonzero cohomology class of degree $n$ where $n\geq 1$ and let $\bP (\z)$ denote the chain complex which fits into the extension $$ 0 \to \Sig ^{n-1} \bP \to \bP (\z) \to \bP \to 0$$ whose extension class is equal to $\z$ under the identification $H^n(G,k)=[\bP, \Sig ^n \bP]$. Note that $\bP (\z)$ has the homology of an $(n-1)$-dimensional sphere. Given a chain complex $\bC$ of projective modules which has homology of an $(n-1)$-sphere, one can choose a pair of isomorphisms $ \varphi : H_0 (\bC) \to k$ and $\phi : H_{n-1} (\bC)\to k$ which is called a polarization of $\bC$, and using this polarization, one can define a unique cohomology class in $\Ext^n_{kG} (k,k)$. This cohomology class is called the $k$-invariant of the polarized complex $\bC$ (see Definition 3 in \cite{Paper:Carlsson82Inventiones}). Note that there is an obvious polarization for $\bP (\z)$ coming from the augmentation map $\e : \bP \to k$ and its shift $\Sig ^n \e : \Sig ^n \bP \to k$, and under this polarization, the $k$-invariant of $\bPz$ is equal to $\z$.
\end{ex}

We now prove some simple but useful lemmas on the extensions of projective chain complexes.

\begin{lem}\label{lem:obstruction for lifting of maps} Let $0 \to \bA \to \bB \to \bC \to 0$ be an extension with extension class $[\alpha]$ where $\alpha :\bC\to \Sig \bA$. Given a chain map $f:\bD \to \bC$, it lifts to a chain map $\tilde f : \bD \to \bB$ if and only if the composition $ \alpha f$ is homotopic to zero.
\end{lem}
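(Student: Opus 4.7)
The plan is to translate everything back into the matrix description of the extension and read off both conditions as the same equation, the only real work being a sign check coming from the shift $\Sig\bA$.

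First I would choose $kG$-module splittings degree-by-degree and write $\bB_n=\bA_n\oplus\bC_n$ with
$$\bd^B=\begin{pmatrix}\bd^A & \alpha \\ 0 & \bd^C\end{pmatrix},$$
as in the discussion preceding the lemma. A candidate lift $\tilde f:\bD\to\bB$ of $f$ necessarily has the form $\tilde f(x)=(g(x),f(x))$ for a unique degree-$0$ $kG$-linear map $g:\bD\to\bA$ (which is \emph{not} assumed to be a chain map). Writing out $\bd^B\tilde f=\tilde f\bd^D$ and using that $f$ is already a chain map in the $\bC$-slot, the condition collapses to the single identity on the $\bA$-slot
$$\alpha f \;=\; g\bd^D-\bd^A g.$$

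Next I would identify this identity with the homotopy condition $\alpha f\simeq 0$. A null-homotopy for $\alpha f:\bD\to\Sig\bA$ is, by definition, a degree-$1$ map $H:\bD\to\Sig\bA$ with $\bd^{\Sig\bA}H+H\bd^D=\alpha f$. Since $(\Sig\bA)_{n+1}=\bA_n$, the data of $H$ is the same as a degree-$0$ map $g:\bD\to\bA$ with $g_n=H_n$; and the sign convention $(\Sig\bd)_i=-\bd_{i-1}$ recorded in Section \ref{sect:preliminaries} turns $\bd^{\Sig\bA}H$ into $-\bd^A g$. Thus the homotopy equation reads precisely
$$-\bd^A g+g\bd^D=\alpha f,$$
which is the equation derived above for $\tilde f$ to be a chain map.

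Putting the two directions together: if $\tilde f$ exists, its $\bA$-component $g$ is a null-homotopy for $\alpha f$; conversely, given a null-homotopy $g$ of $\alpha f$, setting $\tilde f=(g,f)$ produces a lift. The only thing that could go wrong is a stray sign, so I expect the single delicate point to be verifying the sign of $\bd^{\Sig\bA}$ against the sign coming out of the matrix differential $\bd^B$; once these are reconciled the two equations are literally the same, and no further calculation is needed.
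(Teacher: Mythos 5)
Your argument is correct and is essentially the paper's own proof: write the lift in matrix form as $\tilde f = (g,f)$, expand the chain-map equation, and observe that the resulting identity $\alpha f = g\bd^D - \bd^A g$ is exactly the null-homotopy condition for $\alpha f : \bD \to \Sig\bA$ once the degree-$1$ map to $\Sig\bA$ is reinterpreted as a degree-$0$ map to $\bA$ with the sign flip from $\bd^{\Sig\bA} = -\bd^A$. The paper's version is terser (it does not spell out the shift/sign identification), but the content is the same.
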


\begin{proof} Choosing a $kG$-module splitting
for each $n$, we can assume that $\bB _n = \bA_n \oplus \bC_n$ for each $n$. Suppose that $f$ lifts to $\tilde f$, then we can write $\tilde f : \bD \to \bB$ as a pair $\tilde f=(H, f)$ where $H: \bD \to \bA$. The chain map condition for $\tilde f$ gives $-\bd H+H\bd=\al f$. So, $\al f: \bD \to \Sig \bA$ is homotopy equivalent to zero. Conversely, if $\al f \simeq 0$, then there is an $H$ satisfying $-\bd H +H \bd =\al f$. Taking $\tilde f=(H, f)$, we obtain a lifting for $f$.
\end{proof}

Another useful lemma is the following:

\begin{lem}\label{lem:kernel and extensions}
Let ${\mathcal E}: 0 \to \bA \maprt{i} \bB \maprt{\pi} \bC \to 0$ be an extension of chain complexes. If $$0 \to \Sig ^{-1} \bC \to \bA' \to \bB \to 0$$ is an extension with extension class $[\pi]\in [\bB, \bC]$, then $\bA$ and $\bA'$ are  homotopy equivalent.
\end{lem}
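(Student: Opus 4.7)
My plan is to realize $\bA'$ explicitly as a shifted mapping cone of the surjection $\pi$. Since the kernel of $\pi$ is $\bA$, I expect the obvious inclusion $\iota:\bA\hookrightarrow\bA'$ to have contractible cokernel, which forces it to be a weak equivalence; because all complexes involved are projective and bounded below, this will automatically upgrade to a genuine homotopy equivalence by \cite[Chp I, Thm 8.4]{Book:Brown1982CohGrps}, exactly as in the proof of Lemma~\ref{lem:usefulobservation}.

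In detail, I would first fix $kG$-module splittings for each $n$, so that $\bB_n=\bA_n\oplus\bC_n$ with differential $\bd^B=\bigl(\begin{smallmatrix}\bd^A&\alpha\\0&\bd^C\end{smallmatrix}\bigr)$ and projection $\pi=(0,\id)$. Since equivalent extensions have isomorphic middle terms, I may take $\pi$ itself as the representative of $[\pi]$ that defines $\bA'$, obtaining
$$\bA'_n=\bC_{n+1}\oplus\bA_n\oplus\bC_n,\qquad \bd^{A'}=\begin{pmatrix}-\bd^C&0&\id\\0&\bd^A&\alpha\\0&0&\bd^C\end{pmatrix},$$
where the top-left sign comes from the convention $(\Sig^{-1}\bd)_i=-\bd_{i+1}$ of Section~\ref{sect:preliminaries}. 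The map $\iota:\bA\to\bA'$, $a\mapsto(0,a,0)$, is visibly a chain map because $\pi i=0$ kills the only off-diagonal term that could cause trouble.

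Finally, I would observe that the cokernel $\bA'/\iota(\bA)$ is just $\Sig^{-1}\bC\oplus\bC$ with differential $\bigl(\begin{smallmatrix}-\bd^C&\id\\0&\bd^C\end{smallmatrix}\bigr)$, i.e.\ the mapping cone of $\id_{\bC}$, and is contracted by the explicit degree one map $(c',c)\mapsto(0,c')$. The long exact sequence in homology applied to $0\to\bA\to\bA'\to\bA'/\iota(\bA)\to 0$ then shows that $\iota$ induces an isomorphism on homology, and Brown's theorem delivers the homotopy equivalence $\bA\simeq\bA'$. I do not expect any real obstacle: the only subtle points are keeping track of the sign on the $\Sig^{-1}\bC$ block and verifying that the proposed contracting homotopy works, both of which are one-line checks; conceptually this is the standard fact that the cofiber of a surjection is homotopy equivalent to its kernel.
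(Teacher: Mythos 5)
Your proof is correct, and the setup (the explicit matrix for $\bd^{A'}$, the inclusion $\iota(a)=(0,a,0)$) is identical to the paper's. Where you diverge is in how you conclude: the paper exhibits an explicit two-sided homotopy inverse $q:\bA'\to\bA$, $q(c_1,a,c_2)=a-\alpha(c_1)$, together with the degree-one map $H(c_1,a,c_2)=(0,0,c_1)$ satisfying $\bd^{A'}H+H\bd^{A'}=\id-jq$, giving the homotopy equivalence directly. You instead pass to the cokernel $\bA'/\iota(\bA)$, verify it is contractible by hand, deduce via the long exact sequence in homology that $\iota$ is a quasi-isomorphism, and then invoke \cite[Chp I, Thm 8.4]{Book:Brown1982CohGrps} (as in Lemma~\ref{lem:usefulobservation}) to upgrade this to a homotopy equivalence, using that both $\bA$ and $\bA'$ are bounded-below complexes of projectives --- which does hold, since in Section~\ref{sect:Extensions} all complexes are assumed projective and bounded below. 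Both routes are valid; the paper's is more self-contained and elementary (no appeal to Brown or to the standing projectivity hypothesis), while yours is shorter conceptually and recycles machinery already in play. One cosmetic quibble: the quotient $\Sig^{-1}\bC\oplus\bC$ with differential $\bigl(\begin{smallmatrix}-\bd^C&\id\\0&\bd^C\end{smallmatrix}\bigr)$ is not literally the mapping cone of $\id_\bC$ (the degrees are off by a shift and the off-diagonal entry sits in the wrong corner relative to the usual convention), but your explicit contraction $(c',c)\mapsto(0,c')$ does satisfy $\bd H+H\bd=\id$, so contractibility is established regardless of what one calls the complex.
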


\begin{proof} We can assume that $\bB _n= \bA _n \oplus \bC_n$ for all $n$ and the differential $\bd ^B$ is of the form $$\bd ^B=\left[\begin{matrix}\bd ^A & \alpha \\
0 & \bd ^C\end{matrix}\right]$$ where $\alpha : \bC \to \Sig \bA$ is a chain map representing the extension class for $\mathcal E$. Similarly, we can take $\bA '$ as the complex where $\bA'_n= \bC _{n+1} \oplus \bA _n \oplus \bC _n$ for all $n$ and with differential $$\bd ^{A'}=\left[\begin{matrix} -\bd ^C & 0 & \id\\ 0 & \bd ^A & \alpha \\ 0 & 0 & \bd ^C \end{matrix}\right].$$ Let $j: \bA \to \bA'$ be the inclusion defined by $a \to (0, a, 0)$ and let $q: \bA ' \to \bA$ be the projection map given by $(c_1, a, c_2)\to a-\alpha (c_1)$. Taking $H: \bA' \to \Sig ^{-1} \bA'$ as the map $H(c_1, a, c_2)=(0, 0, c_1)$, we see that the equality $$\bd ^{A'} H+H \bd ^{A'} = \id - j q$$ holds. So, $j: \bA \to \bA'$ is a homotopy equivalence.
\end{proof}

The following lemma will be used in the proof of Theorem
\ref{thm:main2}.

\begin{lem}\label{lem:maps from the middle}
Let $0 \to \bA \to \bB \to \bC \to 0$ be an extension of chain complexes with extension class $[\al] \in [\bC , \Sig \bA]$. Given a chain map $\varphi : \bB \to \bD$, we can write $$\varphi =[\varphi_1 \ \varphi_2]$$ by choosing a $kG$-module decomposition for $\bB$. Then, the following holds:
\begin{enumerate}
\item $\varphi _1$ is a chain map. If $\varphi_1=0$, then $\varphi _2$ is also a chain map.
\item If $\varphi _1 \simeq 0$ via a homotopy $H$, then $\varphi \simeq [0 \ \varphi_2']$ where $\varphi_2'= \varphi_2-H\al.$
\item If $\varphi_1=0$, then $\varphi \simeq 0$ if and only if $\varphi_2 \simeq u\al$ for some chain map $u:\bA \to \Sig ^{-1} \bD$.
\end{enumerate}
\end{lem}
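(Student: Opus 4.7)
The plan is to run all three parts in parallel by choosing $kG$-module splittings so that $\bB_n = \bA_n \oplus \bC_n$ for every $n$, with
\[
\bd^B = \left[\begin{matrix} \bd^A & \alpha \\ 0 & \bd^C \end{matrix}\right],
\]
exactly as in the discussion of extension classes preceding the lemma. Any graded map $\varphi : \bB \to \bD$ is then literally a row $[\varphi_1 \ \varphi_2]$ with $\varphi_1 : \bA \to \bD$ and $\varphi_2 : \bC \to \bD$, and each assertion of the lemma reduces to a block-matrix computation.

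For (i), I would expand $\bd^D \varphi = \varphi \bd^B$ componentwise. This separates into $\bd^D \varphi_1 = \varphi_1 \bd^A$, i.e.\ the chain-map condition for $\varphi_1$, and $\bd^D \varphi_2 = \varphi_1 \alpha + \varphi_2 \bd^C$, which collapses to the chain-map condition for $\varphi_2$ as soon as $\varphi_1 = 0$.

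For (ii), I would try the degree-$1$ candidate $K = [H \ 0] : \bB \to \bD$ and simply verify. A direct block calculation gives $\bd^D K + K\bd^B = [\bd^D H + H\bd^A,\ H\alpha]$, and the hypothesis $\bd^D H + H\bd^A = \varphi_1$ makes the right-hand side equal to $[\varphi_1,\ H\alpha]$. Subtracting this from $\varphi$ leaves $[0,\ \varphi_2 - H\alpha] = [0,\ \varphi_2']$, so $\varphi \simeq [0 \ \varphi_2']$.

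For (iii), with $\varphi_1 = 0$ I would search for the most general degree-$1$ null-homotopy $K = [K_1 \ K_2]$ of $[0 \ \varphi_2]$. The identity $\bd^D K + K \bd^B = \varphi$ splits into
\[
\bd^D K_1 + K_1 \bd^A = 0, \qquad \bd^D K_2 + K_2 \bd^C = \varphi_2 - K_1 \alpha.
\]
The first equation says exactly that $K_1$ is a chain map $u : \bA \to \Sig^{-1} \bD$ (using the convention $(\Sig^n \bd)_i = (-1)^n \bd_{i-n}$), and the second is solvable for $K_2$ precisely when $\varphi_2 - u\alpha \simeq 0$. So $\varphi \simeq 0$ if and only if there exists a chain map $u : \bA \to \Sig^{-1}\bD$ with $\varphi_2 \simeq u\alpha$, which is (iii). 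The whole argument is routine block bookkeeping; the only subtle point to watch is the sign convention that identifies a degree-$1$ map $\bA \to \bD$ with a chain map $\bA \to \Sig^{-1}\bD$ in the last part.
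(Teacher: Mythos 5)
Your proof is correct and follows essentially the same block-matrix approach as the paper: the same decomposition, the same null-homotopy $[H\ 0]$ for (ii), and the observation (now needed for (iii)) that the condition $\bd^D K_1 + K_1\bd^A = 0$ identifies $K_1$ with a chain map $\bA \to \Sig^{-1}\bD$ under the shift sign convention. The only difference is that the paper disposes of (iii) with the remark that it ``follows from (ii),'' whereas you carry out the general null-homotopy computation explicitly, which is arguably a clearer way to present the same argument.
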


\begin{proof} The first statement is obvious. For (ii), take $G=[H \ 0]$, then
$G \bd +\bd G=[\varphi_1\ H\al]=\varphi-[0 \ \varphi_2']$. The last statement follows from (ii).
\end{proof}

\section{Proof of Theorem \ref{thm:main1}}
\label{sect:main1}

Let $G$ be a finite group and $k$ be a field of 
characteristics $p>0$. Given a $kG$-module $M$, there is a 
projective cover $q_M:P_M \to M$ for $M$, and the $kG$-
module $\Omega M$ is defined as the kernel of this 
surjective map. Inductively, one can define $\Omega^n M$ 
for all $n\geq 0$ by taking $\Omega ^0M=M$ and $\Omega ^n 
M= \Omega (\Omega ^{n-1} M)$ for $n \geq 1$. Note that if 
$$ \cdots \to P_n \maprt{\partial _n} P_{n-1}\maprt
{\partial_{n-1}} P_{n-2} \to \cdots \to P_1 \maprt{\partial _1} P_0 \to 
M\to 0$$ is a projective resolution of $M$, the kernel of 
$\partial_{n-1} $ is isomorphic to $\Omega ^n M \oplus Q$ 
for some projective $kG$-module $Q$. If $P_*$ is a minimal 
projective resolution, then we have  $\ker \bd _n \cong 
\Omega ^n M$ for all $n$.

Let $P_*$ be a minimal projective resolution for $k$. A 
cohomology class $\z \in H^n(G, k)$ is represented by a 
homomorphism $\hat \z :P_n \to k$ which satisfies the 
cocycle condition $\delta \hat \z=0$. So, $\hat \z$ defines 
a map $\hat \z : \Omega ^n k \to k$ and the $kG$-module $
\Lz$ is defined as the kernel of this homomorphism. It is 
easy to show that $\Lz$ is uniquely defined by $\z$ up to 
isomorphism (see \cite[Lemma 6.10]
{Book:Carlson1996Modules}). As a consequence of the
definition, we have the following diagram:
\begin{equation*}
\begin{CD}
@. L_{\zeta} @=  L_{\zeta}\\
@.  @VVV   @VVV\\
0 @>>>  \Omega^n (k) @>>> P_{n-1} @>>> P_{n-2} @>>>
\cdots @>>> k @>>> 0\\
@. @V{\hat{\zeta}}VV  @VVV  @|@|@|  \\
0 @>>> k @>>> P_{n-1}/ L_{\zeta} @>>> P_{n-2} @>>>
\cdots @>>> k @>>> 0 \ .
\end{CD}
\end{equation*}
The extension class of the extension on the bottom row of the above diagram is equal to $\z$ under the usual isomorphism between $H^n(G,k)$ and the algebra $\mathcal{U}^n (k,k)$ of $n$-fold $k$ by $k$ extensions.

The correspondence between $\z$ and the homomorphism $\hat \z : \Omega ^n k \to k$ can be made more explicit using the stable module category. Recall that the stable module category of $kG$-modules is a category where the objects are $kG$-modules and morphisms are given by $$\underline{\Hom} _{kG} (M, N)= \Hom _{kG} (M, N)/\PHom_{kG} (M,N)$$ where $\PHom _{kG} (M,N)$ denotes the subgroup of all $kG$-homomorphisms $M \to N$ that factor through a projective module. For a positive $n$, we have $$\Ext ^n _{kG} (M, N) \cong{\underline \Hom} _{kG} (\Om ^n M, N)$$ and under this identification $\z$ corresponds to the map $\hat \z : \Omega ^n k\to k$ in stable module category (see \cite[page 38-39]{Book:Carlson1996Modules}).

In \cite{Paper:Carlson1987Products}, Carlson considers the dual of the $k$ by $k$ extension given in the above diagram. This is an extension of the form
\begin{equation}\label{eqn:dualext}
0 \to k \to P_0^* \to \dots \to P_{n-2}^* \to
U_{\z} \to k \to 0
\end{equation}
where $U_{\z}=(P_{n-1}/ \Lz )^*$. The homomorphism $\Uz \to k$ is denoted by $\epsilon$. Carlson proves the following:

\begin{prop}\label{lem:splitting for Uz}
If $\zeta$ is productive, then there exists a homomorphism $\phi :\Uz \to \Uz \otimes \Uz $ such that $(\id \otimes \e ) \phi =\id=(\e \otimes \id) \phi$.
\end{prop}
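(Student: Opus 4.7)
The plan is to interpret the existence of $\phi$ as the splitting of a canonical short exact sequence of $kG$-modules, and then to relate the extension class of that sequence to a product by $\z$ acting on $\Ext^*_{kG}(\Lz,\Lz)$, which vanishes by the productivity hypothesis.

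Set $K = \ker \e \subseteq \Uz$, and define the $kG$-submodule
$$ E = \{\, x \in \Uz \ot \Uz \mid (\id \ot \e)(x) = (\e \ot \id)(x) \,\}. $$
A short calculation, using a $k$-linear splitting of $\e$, shows that the two restrictions $\id \ot \e$ and $\e \ot \id$ agree on $E$ and define a $kG$-linear surjection $p : E \to \Uz$ with kernel $K \ot K$. Thus there is a short exact sequence
$$ 0 \to K \ot K \to E \maprt{p} \Uz \to 0, $$
and the datum of a $kG$-homomorphism $\phi$ as in the proposition is exactly a $kG$-linear section of $p$. Hence it suffices to prove that the extension class $\omega \in \Ext^1_{kG}(\Uz, K \ot K)$ vanishes whenever $\z$ is productive.

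To identify $\omega$, I would pass to the stable module category and dimension-shift. Dualizing the short exact sequence $0 \to \Lz \to P_{n-1} \to P_{n-1}/\Lz \to 0$ gives $\Uz \simeq \Omega \Lz^{*}$ stably, while breaking the dual extension (\ref{eqn:dualext}) at $\Uz$ exhibits $K$ stably as $\Omega^{-(n-1)} k$. Since $\Omega$ commutes with the tensor product up to projective summands, these stable equivalences convert $\Ext^1_{kG}(\Uz, K \ot K)$ into an Ext group of the form $\Ext^m_{kG}(\Lz,\Lz)$ for a suitable $m$. Tracing the construction of $E$ through the identification, in particular the role played by the two copies of $\e$, shows that $\omega$ lies in the image of multiplication by $\z$ on $\Ext^*_{kG}(\Lz,\Lz)$. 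By productivity, $\z$ annihilates this group, so $\omega = 0$, the sequence splits, and the splitting is the required $\phi$.

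The main technical obstacle is the second step: tracing $\omega$ carefully through the chain of stable equivalences and duality isomorphisms so as to identify it as a product by $\z$ on $\Ext^*_{kG}(\Lz, \Lz)$. This requires delicate sign and degree bookkeeping for Yoneda products, $\Omega$-shifts, and $k$-linear duality, since the identification $\Uz^{**} \cong \Uz$ and the compatibility of the extension class $\omega$ with tensor-shifts are not entirely formal.
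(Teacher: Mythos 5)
The paper does not give its own proof; it simply cites Carlson's original paper (Prop.\ 2.3 of \cite{Paper:Carlson1987Products}), so I will assess your attempt on its own terms.

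Your first step is correct and is a nice reformulation. The subset $E = \{x \in \Uz \ot \Uz : (\id\ot\e)(x) = (\e\ot\id)(x)\}$ is indeed a $kG$-submodule, the common restriction is a $kG$-surjection $p : E \to \Uz$ whose kernel is $(K\ot\Uz)\cap(\Uz\ot K) = K\ot K$ (a dimension count confirms this over a field), and a $\phi$ as in the statement is precisely a $kG$-section of $p$. So everything hinges on the vanishing of the extension class $\omega \in \Ext^1_{kG}(\Uz, K\ot K)$.

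The gap is in your second step. You assert that, after stable identifications, $\omega$ ``lies in the image of multiplication by $\z$ on $\Ext^*_{kG}(\Lz,\Lz)$'' and is therefore killed by productivity. But the degree bookkeeping you defer does not go in your favor: using $\Uz \cong \Omega(\Lz^*)$ and $K \cong \Omega^{-(n-1)}k$, one finds
$\Ext^1_{kG}(\Uz, K\ot K) \cong \Ext^{2n}_{kG}(\Lz^*, k) \cong \Ext^{2n}_{kG}(k,\Lz),$
which is \emph{not} a subquotient of $\Ext^*_{kG}(\Lz,\Lz)$, and productivity does not directly annihilate anything in $\Ext^*_{kG}(k,\Lz)$. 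What one does get cleanly is the following: pushing $\omega$ forward along $K\ot K \hookrightarrow K\ot\Uz$ yields the class of $0 \to K\ot\Uz \to \Uz\ot\Uz \xrightarrow{\e\ot\id} \Uz \to 0$, i.e.\ $\z\cdot\id_{\Uz}$ in $\Ext^1_{kG}(\Uz, K\ot\Uz)\cong\Ext^n_{kG}(\Uz,\Uz)\cong\Ext^n_{kG}(\Lz,\Lz)$ (and symmetrically along $K\ot K\hookrightarrow \Uz\ot K$). Productivity kills these pushforwards, but that only places $\omega$ in the kernel of $\Ext^1(\Uz,K\ot K)\to\Ext^1(\Uz,K\ot\Uz)\oplus\Ext^1(\Uz,\Uz\ot K)$, which is the image of the connecting map from $\Hom(\Uz, K\ot\Uz + \Uz\ot K)$ and need not be zero. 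So the conclusion ``$\omega = 0$'' does not follow from what you have set up; a genuinely new idea is needed to close the argument (e.g.\ exploiting the $T$-symmetry of $E$, or a different filtration of $\Uz\ot\Uz$). You flag this step yourself as the ``main technical obstacle,'' and indeed that is exactly where the proof is missing.
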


\begin{proof} See \cite[Prop 2.3]{Paper:Carlson1987Products}.
\end{proof}

Let $\bD _{\z}$ denote the chain complex obtained by truncating both ends of the extension given in  (\ref{eqn:dualext}). Note that $\bD _{\z}$ has an augmentation map $\e : \bD _{\z} \to k$ which comes from $\e :\Uz \to k$. We have the following:

\begin{prop}\label{prop:slitting for Cz}
Let $\z \in H^n (G,k)$ be a nonzero cohomology class of
degree $n$ where $n\geq 1$. If $\z$ is productive, then
there exists a chain map $\phi'  : \bD _{\z} \to \bD _{\z}
\otimes \bD _{\z} $ such that $(\id \otimes \e ) \phi '
\simeq \id$ and $(\e \otimes \id) \phi' \simeq \id$.
\end{prop}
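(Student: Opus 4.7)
The plan is to first invoke Proposition~\ref{lem:splitting for Uz} to obtain a $kG$-module homomorphism $\phi : \Uz \to \Uz \otimes \Uz$ satisfying $(\id \otimes \e)\phi = \id = (\e \otimes \id)\phi$, and then to extend $\phi$ to a chain map $\phi' : \bD_{\z} \to \bD_{\z} \otimes \bD_{\z}$ by the standard inductive lifting argument. Setting $\phi'_0 = \phi$, at each subsequent degree $i$ with $1 \le i \le n-1$ I would lift the cycle $\phi'_{i-1}\bd x$ through the boundary of $\bD_{\z} \otimes \bD_{\z}$; since $(\bD_{\z})_i = P^*_{n-1-i}$ is projective, this succeeds as soon as the cycle is a boundary. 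For $i \ge 2$ this is automatic because the K\"unneth formula gives $H_{i-1}(\bD_{\z} \otimes \bD_{\z}) = 0$ in the range $1 \le i-1 \le n-2$ (as $\bD_{\z}$ has $k$-homology only in degrees $0$ and $n-1$). For $i = 1$ the obstruction lies in $H_0(\bD_{\z} \otimes \bD_{\z}) \cong k$, detected by $\e \otimes \e$; the counit condition forces $(\e \otimes \e)\phi(\bd x) = \e(\bd x) = 0$ because $\bd x \in \ker \e = \im \bd$, so the obstruction vanishes and $\phi'$ can be built.

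To verify the two counit identities up to chain homotopy, consider $g = (\id \otimes \e)\phi' - \id : \bD_{\z} \to \bD_{\z}$, which is a chain map vanishing on $(\bD_{\z})_0 = \Uz$. My plan is to construct a null-homotopy of $g$ degree by degree, using projectivity of the $P^*_j$ and exactness of the dual extension of $\z$ to make every intermediate obstruction vanish; the only residual obstruction will sit at the top degree $n-1$ and is detected by the induced map $g_* : H_{n-1}(\bD_{\z}) \to H_{n-1}(\bD_{\z})$. The main obstacle is to arrange $g_* = 0$, and this is where I plan to exploit the freedom in $\phi'_{n-1}$, which is determined only up to a cycle $\alpha : P^*_0 \to Z_{n-1}(\bD_{\z} \otimes \bD_{\z})$ without spoiling the chain-map property in lower degrees. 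Since $H_{n-1}(\bD_{\z} \otimes \bD_{\z}) \cong k \oplus k$ by K\"unneth (with the two summands coming from $H_{n-1} \otimes H_0$ and $H_0 \otimes H_{n-1}$), a suitable choice of $\alpha$ makes $\phi'_{*}(1) = (1,1)$, so that both $(\id \otimes \e)_{*}\phi'_{*}$ and $(\e \otimes \id)_{*}\phi'_{*}$ equal the identity on $H_{n-1}$. The partial null-homotopy then extends through the top degree because $P_0^*$ is injective ($kG$ being Frobenius), giving $(\id \otimes \e)\phi' \simeq \id$; the argument for $(\e \otimes \id)\phi' \simeq \id$ is symmetric.
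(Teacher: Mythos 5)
The overall structure of your argument matches the paper's (which is quite terse): use Proposition~\ref{lem:splitting for Uz} to get $\phi$ at degree $0$, extend it to a chain map $\phi'$ using projectivity of the $P_j^*$ and the vanishing of $H_i(\bD_\z\otimes\bD_\z)$ in the range $0<i<n-1$, and then build a chain homotopy $(\id\otimes\e)\phi'\simeq\id$ by a similar dimension shift. Your treatment of the degree-$0$ obstruction to lifting (detected by $\e\otimes\e$) is a genuine improvement in precision over the paper, which only invokes the homology vanishing for $0<i<n-1$ and does not comment on degree~$0$.

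However, the last part of your argument has a gap. You claim that by modifying $\phi'_{n-1}$ by a ``cycle'' $\alpha:P_0^*\to Z_{n-1}(\bD_\z\otimes\bD_\z)$ you can arrange $\phi'_*(1)=(1,1)$ in $H_{n-1}(\bD_\z\otimes\bD_\z)\cong k\oplus k$. This presumes that $\alpha_*$ can realize an arbitrary map $H_{n-1}(\bD_\z)\cong k\to k\oplus k$. But $Z_{n-1}(\bD_\z)$ is the socle of the indecomposable injective $P_0^*$ (since $\bP$ is a minimal resolution), and one cannot in general extend a $kG$-map out of the socle to a map out of all of $P_0^*$ into a target that is not injective; so the existence of the required $\alpha$ needs justification. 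In fact, the modification is unnecessary: any chain self-map $g$ of $\bD_\z$ with $g_0=0$ automatically has $g_*=0$ on $H_{n-1}$, because $g_*$ is computed by restricting a $kG$-map $P_0^*\to k$ to the socle of $P_0^*$, which lies in $\mathrm{rad}(P_0^*)$ and hence is killed by any map to the simple module $k$. You should prove this vanishing rather than try to achieve it by a modification. Separately, even after $g_*=0$ on $H_{n-1}$, your closing sentence ``the partial null-homotopy then extends through the top degree because $P_0^*$ is injective'' conflates two things: the obstruction to completing the homotopy at the top is not just $g_*|_{H_{n-1}}$, but the full map $g_{n-1}-H_{n-2}\bd:P_0^*\to Z_{n-1}(\bD_\z)\cong k$, and killing it requires extending a map into the non-injective module $k$ over the inclusion $\im(\bd_{n-1})\subset P_1^*$, which does not follow from injectivity of $P_0^*$. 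This final step needs a careful argument (exploiting the remaining freedom in $H_0,\dots,H_{n-2}$), and the paper's own proof is equally silent on it.
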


\begin{proof} Since $P_0 ^*, \dots, P_{n-2}^*$ are
projective and $\bD _{\z} \otimes \bD _{\z}$ has no homology in dimensions $0<i<n-1$, the map $\phi : \Uz \to \Uz
\otimes \Uz$ extends to a chain map $\phi' : \bD _{\z} \to
\bD _{\z} \otimes \bD _{\z}$. Since $(\id \otimes \e ) \phi
= \id=(\e \otimes \id) \phi$, we can construct homotopies $
(\id \otimes \e ) \phi ' \simeq \id$ and $(\e\otimes \id)
\phi' \simeq \id $.
\end{proof}

Now, we consider the chain complex $\bP (\z)$ defined in the introduction. Recall that $\bP (\z)$ is a chain complex of projective modules that fits into an extension of the form 
\begin{equation}\label{eqn:defining sequence for Lz}
0 \to \Sig^{n-1} \bP \to \bP (\z) \to \bP \to 0
\end{equation}
with extension class $\z \in [\bP , \Sig ^n \bP ]$. Our
first observation is the following:

\begin{prop}\label{prop:Projective resolution}
The complex $\bP (\z)$ is a projective resolution of
$\bC _{\z}$, i.e., there is a chain map $\bP (\z) \to \bC_
{\z}$ that induces an isomorphism on homology.
\end{prop}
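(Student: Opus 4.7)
The plan is to construct an explicit chain map $f: \bP(\z)\to \bC_\z$ and verify it induces an isomorphism on homology. The guiding intuition is that there is a natural ``almost chain map'' $g: \bP\to \bC_\z$ given by the identity on $P_0,\ldots,P_{n-2}$, by the quotient $P_{n-1}\to P_{n-1}/\Lz$ at degree $n-1$, and by zero above. This $g$ fails to commute with $\bd$ only on $P_n$, where the obstruction is precisely (a lift of) the cocycle $\hat\z$. Since the extension $0\to\Sig^{n-1}\bP\to\bP(\z)\to\bP\to 0$ has class $\z$, it is built exactly so that this obstruction can be cancelled by a correction term lying in the $\Sig^{n-1}\bP$ summand.

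Concretely, first I would choose a chain map $\alpha: \bP\to\Sig^n\bP$ representing $\z$ whose top component $\alpha_n: P_n\to P_0$ satisfies $\epsilon\alpha_n=\hat\z$. Using the decomposition $\bP(\z)_i=\bP_{i-n+1}\oplus \bP_i$, I would then define $f_i=\id$ for $0\le i\le n-2$, $f_{n-1}(a,b)=[b]-\iota\epsilon(a)$ for $(a,b)\in \bP_0\oplus \bP_{n-1}$ (where $\iota: k\hookrightarrow P_{n-1}/\Lz$ is the natural inclusion from the defining extension of $\Lz$), and $f_i=0$ for $i\ge n$. The chain map identity $f\bd^{\bP(\z)}=\bd^{\bC_\z}f$ is automatic outside degree $n$; at degree $n$ it reduces to the identity $[\bd b']=\iota\hat\z(b')$ in $P_{n-1}/\Lz$ for $b'\in P_n$, which follows from $\Lz=\ker(\hat\z: \Om^n k\to k)$ together with the choice $\epsilon\alpha_n=\hat\z$.

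Finally, to confirm that $f$ is a quasi-isomorphism, I would compute $H_i(f)$ in the two nonzero degrees. Both $H_0$'s are identified with $k$ via the augmentation, and $f_0=\id$ does the job. For $H_{n-1}$, one computes $H_{n-1}(\bP(\z))\cong k$ by sending a cycle $(a,b)\in \bP_0\oplus \Om^n k$ to $\epsilon(a)-\bar{\hat\z}(b)$, and $H_{n-1}(\bC_\z)\cong \Om^n k/\Lz\cong k$ via $\bar{\hat\z}$; a direct check then shows $H_{n-1}(f)$ is $\pm\id$. The main technical obstacle will be careful bookkeeping of signs and of the various embeddings of $k$ (via $\iota$, $\epsilon$, and $\bar{\hat\z}$); conceptually there is little surprise, since both complexes are polarized chain complexes with homology of an $(n-1)$-sphere and the same $k$-invariant $\z$, so they should represent the same object of the derived category.
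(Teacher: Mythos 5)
Your proof is correct but takes a genuinely different, more hands-on route than the paper's. The paper's argument is indirect: it forms the infinite splice $\bC_\z^\infty$, tensors the short exact sequence $0\to\bC_\z\to\bC_\z^\infty\to\Sig^n\bC_\z^\infty\to 0$ with $\bP$, invokes the Benson--Carlson computation identifying the surjection $\bP\ot\bC_\z^\infty\to\bP\ot\Sig^n\bC_\z^\infty$ with $\z\in[\bP,\Sig^n\bP]$, and then applies Lemma~\ref{lem:kernel and extensions} to conclude $\bP\ot\bC_\z\simeq\bP(\z)$, whence the quasi-isomorphism $\bP(\z)\to\bC_\z$. You instead write down an explicit chain map $f:\bP(\z)\to\bC_\z$ on the summands $\bP(\z)_i=\bP_{i-n+1}\oplus\bP_i$. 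The key move --- normalizing a representative $\al$ of the extension class so that $\e\al_n=\hat\z$, and reducing the degree-$n$ chain-map condition to the identity $[\bd b']=\iota\hat\z(b')$ in $P_{n-1}/\Lz$ --- is precisely where the hypothesis that the $k$-invariant of $\bP(\z)$ equals $\z$ enters, and the homology check in degree $n-1$ works because $\bd_n:P_n\to\Om^nk$ and $\bd_1:P_1\to\Om^1k$ are surjective. Your route is more elementary and self-contained, at the cost of bookkeeping and sign-checking; the paper's is shorter given the auxiliary results it imports from Benson--Carlson. One small caveat: the formulas $f_i=\id$ for $0\le i\le n-2$ and the claim $H_0\cong k$ implicitly assume $n\ge 2$; for $n=1$ one has $(\bC_\z)_0=P_0/\Lz$ (a two-dimensional extension of $k$ by $k$, not $k$ itself) and $\bP(\z)_i=P_i\oplus P_i$ in every degree, so degree $0$ must be handled by the $f_{n-1}$ formula rather than the identity, though an essentially identical calculation shows $f_0$ is still a quasi-isomorphism there.
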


\begin{proof} The proof follows from an argument given in the proof of Lemma 3.1 in \cite{Paper:BensonCarlson94ProjectiveResolutions}. 
Let $\bC_{\z} ^{\infty}$ denote the complex obtained by splicing $\bC_{\z}$ with itself infinitely many times in the positive direction. Note that there is a short exact sequence of the form $$ 0 \to \bC _{\z} \to \bC _{\z} ^{\infty} \to \Sig ^n \bC _{\z}^{\infty} \to 0.$$  After tensoring this sequence with a projective resolution $\bP$ of $k$, we obtain a short sequence of projective chain complexes of the form
$$ 0 \to \bP \otimes \bC _{\z} \to \bP \otimes \bC _{\z} ^{\infty} \to \bP \otimes \Sig ^n \bC _{\z} ^{\infty} \to 0.$$ The complex $\bP \otimes \bC _{\z} ^{\infty}$ is a projective resolution of $k$ since the complex $\bC _{\z} ^{\infty}$ has the homology of a point. Similarly, the complex $\bP \ot \Sig ^n \bC _{\z}^{\infty}$ is homotopy equivalent to $\Sig ^n \bP$. It is shown in \cite[page 455]{Paper:BensonCarlson94ProjectiveResolutions} that the
map $$ \bP \otimes \bC _{\z} ^{\infty} \to \bP \otimes \Sig ^n \bC_{\z} ^{\infty}$$ represents the cohomology class $\z$ under the identification $[\bP, \Sig ^n \bP ]=\Ext_{kG} ^n (k,k)$.  Now, by Lemma \ref{lem:kernel and extensions}, we can conclude that $\bP \otimes \bC _{\z}$ is chain homotopy equivalent to $\bP (\z)$. So there is a chain map $\bP (\z) \to \bC _{\z}$ which induces an isomorphism on homology.
\end{proof}

We have the following immediate corollary.

\begin{cor}\label{cor:Projective resolution}
The complex $\bP (\z)$ is a projective resolution
of $\bD _{\z}$.
\end{cor}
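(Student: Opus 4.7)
The plan is to deduce this immediately from Proposition \ref{prop:Projective resolution} together with the fact, cited in the introduction from \cite[Proposition 5.2]{Paper:BensonCarlson94ProjectiveResolutions}, that there exists a chain map $\bC_{\z} \to \bD_{\z}$ which induces an isomorphism on homology. Since the definition of a projective resolution of a chain complex only requires the existence of a weak equivalence from a bounded-below complex of projectives, composing weak equivalences will suffice.

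Concretely, I would proceed as follows. First, invoke Proposition \ref{prop:Projective resolution} to obtain a chain map $\bP(\z) \to \bC_{\z}$ that is a weak (homology) equivalence, with $\bP(\z)$ a bounded-below complex of projective $kG$-modules. Next, invoke the chain map $\bC_{\z} \to \bD_{\z}$ from \cite[Proposition 5.2]{Paper:BensonCarlson94ProjectiveResolutions} which is a weak equivalence. Composing these two chain maps yields a chain map $\bP(\z) \to \bD_{\z}$ which, being a composition of maps inducing isomorphisms on homology, itself induces an isomorphism on homology. Hence $\bP(\z)$, together with this composite, is a projective resolution of $\bD_{\z}$.

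There is essentially no obstacle: the statement is a one-line consequence once the Benson–Carlson weak equivalence between $\bC_{\z}$ and $\bD_{\z}$ is invoked. The only point worth a brief remark is that the notion of projective resolution used in Section \ref{sect:preliminaries} allows one to replace the target of a projective resolution by any complex weakly equivalent to it; this is implicit in Lemma \ref{lem:usefulobservation}, and in fact Lemma \ref{lem:usefulobservation} gives an alternative formulation (every weak equivalence between projective resolutions is a homotopy equivalence, so any bounded-below projective complex mapping by a weak equivalence to a given complex serves as a projective resolution).
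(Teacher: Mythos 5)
Your proof is correct and is essentially the same as the paper's: both invoke Proposition \ref{prop:Projective resolution} for a weak equivalence $\bP(\z)\to\bC_\z$ and then compose with the Benson--Carlson weak equivalence $\bC_\z\to\bD_\z$ from \cite[Proposition 5.2]{Paper:BensonCarlson94ProjectiveResolutions}. The paper states this in one line; you have merely spelled out the routine composition argument.
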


\begin{proof}
This follows from the fact that there is a chain map $\bC _{\z} \to \bD _{\z}$ which induces an isomorphism on homology (see \cite[Proposition 5.2]{Paper:BensonCarlson94ProjectiveResolutions}).
\end{proof}

Now we are ready to prove Theorem \ref{thm:main1}.

\begin{proof}[Proof of Theorem \ref{thm:main1}]
Assume that $\z$ is productive. Then, by Proposition \ref{prop:slitting for Cz}, there is a chain map $\phi' : \bD _{\z} \to \bD _{\z} \otimes \bD_{\z} $ such that $(\id \otimes \e ) \phi' \simeq \id$ and $ (\e\otimes \id) \phi' \simeq \id$. By Corollary \ref{cor:Projective resolution},
$\bP (\z)$ is a projective resolution for $\bD _{\z}$. So, by the standard properties of projective resolutions, there is a chain map
$\D:  \bP (\zeta ) \to \bP (\z) \otimes \bP (\z)$ which
makes the following diagram commute:
\begin{equation*}
\begin{CD}
\bP (\z) @>>> \bD _{\z} \\
@V{\D}VV @VV{\phi'}V \\
\bP (\z) \otimes \bP (\z) @>>> \bD _{\z} \otimes \bD _{\z}\ . \\
\end{CD}
\end{equation*}
Since both $(\id \otimes \e ) \D $ and $(\e \otimes \id ) \D$ induces the identity map on homology, they are homotopic to the identity map on $\bP (\z)$.

For the converse, assume that there is a chain map $\Delta : \bP (\z) \to \bP (\z) \otimes \bP (\z)$ satisfying the properties. Consider the sequence
$$0 \to \Sig ^{n-1} \bP \otimes \bP (\z) \to \bP (\z) \otimes \bP(\z) \to \bP \otimes \bP (\z) \to 0$$ which is obtained by tensoring the sequence (\ref{eqn:defining sequence for Lz}) with $\bP (\z)$. Note that using the chain homotopy equivalence $\bP \otimes \bP (\z) \simeq k \otimes \bP (\z) \cong \bP (\z)$, we can view the diagonal map $\D$ as a splitting map for this exact sequence. This implies that the composition
\begin{equation*} \begin{CD} \mu_{\z}: \bP (\z)
@>\simeq>> \bP \otimes \bP (\z) @>{\hat \z \otimes \id}>> \Sig ^n \bP\otimes \bP (\z) @>\simeq>> \Sig^n \bP (\z) \\
\end{CD}
\end{equation*}
is homotopic to zero.

Given a chain complex $\bC$, let $\Gamma _k \bC$ denote the truncation of $\bC$ at $k$. This is a complex where $(\Gamma _k \bC)_i =\bC_i$ for all $i\geq k$ and $(\Gamma _k \bC)_i =0$ otherwise. The differentials of $\Gamma _k \bC$ are the same as the differentials of $\bC$ whenever it is not zero. Note that when we truncate both of the complexes $\bP (\z)$ and $\Sig^n \bP(\z)$ at $k=2n$, we get a  chain map $$\Gamma _{2n} \mu_{ \z} : \Gamma _{2n} \bP (\z) \to \Gamma _{2n} \Sig ^n \bP (\z).$$  Both of these truncated complexes have only one nontrivial homology which is at dimension $2n$. It is easy to see that $$H_{2n} (\Gamma _{2n} \bP (\z)) \cong \Om ^n \Lz \oplus (\proj) \ \ \text{and}\ \ H_{2n} (\Gamma _{2n} \Sig ^n \bP (\z ))\cong \Lz \oplus (\proj)$$ and we claim that the map induced by $\Gamma _{2n} \mu _{\z}$ on homology is stably equivalent to the composition
\begin{equation*}\begin{CD} \Om ^n \Lz @>\cong>>  \Om ^n
k  \otimes \Lz @>{\hat \z \otimes \id}>> k \otimes \Lz  @>\cong>> \Lz \ .\end{CD}
\end{equation*} 
To see this first note that $$H_{2n}(\Gamma _{2n} (\bP \otimes \bPz ))\cong H_{2n} (\Gamma _{n} \bP \otimes \Gamma _n \bPz)\oplus Q $$ for some projective module $Q$. This gives that $$H_{2n} (\Gamma _{2n} (\bP \otimes \bPz))\cong (\Omega ^n k \otimes \Lz) \oplus Q.$$
Similarly, we have $$H_{2n} (\Gamma _{2n} (\Sig ^n \bP \otimes \bPz ))\cong (k \otimes \Lz)\oplus Q'$$ for some projective module $Q'$ and the map between the nonprojective parts of these modules is induced by the chain map $$\Gamma _n \hat \zeta \otimes \id:\Gamma _n \bP \otimes \Gamma _n \bPz \to \Gamma _n \Sig ^n \bP \otimes \Gamma _n \bPz.$$
The map induced by this chain map on homology is obviously
 \begin{equation*}\begin{CD} \Om ^n
k  \otimes \Lz @>{\hat \z \otimes \id}>> k \otimes \Lz \end{CD}
\end{equation*} so the claim is true.

Now, since $\mu_{\z} \simeq 0$, the map induced by the chain map $\Gamma _{2n} \mu _{\z}$ on homology splits through a projective module. It follows that the image of 
$\z$ in $$\underline \Hom _{kG} (\Omega ^n \Lz, \Lz )\cong \Ext ^n_{kG} (\Lz, \Lz )$$ is zero.
\end{proof}

\section{Proof of Theorem \ref{thm:main2}}
\label{sect:main2}

In this section, we study the obstructions for the existence of a chain map $\D: \bPz \to \bPz \otimes \bPz$ satisfying $(\e \ot \id)\D\simeq \id$ and $(\id \ot \e )\D\simeq \id$. The chain complex $\bPz$ fits into an 
extension of the form $$ 0 \to \Sig ^{n-1} \bP \to \bPz \to \bP \to 0 $$ with extension class $\z \in [\bP, \Sig ^n \bP]$. To avoid complicated formulas with $(-1)^{n-1}\bd$, we regard $\Sig ^{n-1} \bP$ not as a shift 
of $\bP$, but as a separate chain complex denoted by $\bQ$. Let $\al: \bP \to \Sig \bQ$ be a representative of the extension class of this extension. Then, by choosing a $kG$-module splitting, we can express the differential on $\bP (\z)$ as a matrix by $$\bd =\left[\begin{matrix}
\bd ^Q & \al \\ 0 & \bd ^P\end{matrix}\right].$$

The splitting for $\bPz$ gives a splitting for the complex $\bPz \otimes \bPz$ 
where $$(\bPz \otimes \bPz )_i= (\bQ\otimes \bQ)_i \oplus (\bQ \otimes \bP)_i 
\oplus ( \bP \otimes \bQ )_i \oplus (\bP\otimes \bP)_i$$ for all $i$ and with respect to this splitting, the differential for $\bPz \ot \bPz$ can be expressed in the matrix form as $$\bd =\left[\begin{matrix}\bd & 
\id \times \al & \al \times \id & 0 \\ 0 &  \bd  & 0 & \al \times \id \\ 
0 & 0 & \bd  & \id \times \al \\ 0 & 0 & 0 & \bd
\end{matrix}\right].$$
Note that the differentials on the diagonal of the above matrix are of the form $$
\bd=\bd \times \id +\id \times \bd$$ and, by the usual
convention of signs, we have $$(\bd \times \id+\id \times
\bd)(x\otimes y)=\bd (x)\ot y+(-1)^{\deg x} x \otimes \bd (y).$$ Note also that $$(\id \times
\al)(\al \times \id)=(-1)^{\deg \al} (\al \times \id)(\id
\times \al)=-(\al \times \id)(\id \times \al),$$ so the
above matrix squares to zero.

Because of the shape of the matrix for $\bd$, there is a 
3-step filtration for $\bPz \otimes \bPz$. Let us define
$\bP (\z \oplus \z)$ as the chain complex
$$\bP (\z \oplus \z) _i= (\bQ \otimes \bP)_i \oplus (
\bP \otimes \bQ)_i \oplus (\bP \otimes \bP)_i$$ with differential $$\bd =\left[\begin{matrix}
\bd & 0 & \al \times \id \\ 0 & \bd & \id \times \al \\ 0 & 0 & \bd \end{matrix}\right].$$
Note that $\bP (\z \oplus \z)$ fits into the extension of the form
$$0 \to (\bQ \otimes \bP) \oplus (\bP \otimes \bQ)
\to \bP (\z \oplus \z ) \to \bP \otimes \bP \to 0 $$
with extension class
$$\theta =\left[\begin{matrix} \al \times \id \\
\id \times \al \end{matrix}\right].$$
By our choice of $\bP (\z \oplus \z )$, there is also an extension
$$ 0 \to \bQ \otimes \bQ \to \bPz \otimes \bPz \to
\bP (\z \oplus \z) \to 0$$ with extension class
$$ \eta=\left[\begin{matrix} \id \times \al & \al \times \id & 0\end{matrix}\right].$$
Our first observation is the following:

\begin{prop}\label{prop:firstlifting}
There is a chain map $\psi : \bPz \to \bP (\z \oplus \z)$
which makes the following diagram commute
\begin{equation*}\begin{CD} 0 @>>> \bQ @>>> \bPz @>>> \bP
@>>> 0 \\ @. @VV{(\D _1, \D _2 )}V @VV{\psi}V
@VV{\D }V @. \\
0 @>>> (\bQ \otimes \bP) \oplus (\bP \otimes \bQ ) @>>> \bP
(\z \oplus \z ) @>>> \bP \otimes \bP @>>> 0 \\
\end{CD}
\end{equation*}
where $\D , \D_1, \D_2$ are chain maps covering the map $k\to k\otimes k$ defined by $\lambda \to \lambda \otimes 1$.
\end{prop}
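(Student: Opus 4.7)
The plan is to construct $\psi$ explicitly in matrix form using the block decompositions of $\bPz$ and $\bP(\z \oplus \z)$ introduced in the text. First I would choose chain maps $\D : \bP \to \bP \otimes \bP$, $\D_1 : \bQ \to \bQ \otimes \bP$, and $\D_2 : \bQ \to \bP \otimes \bQ$, each covering the diagonal $\lambda \mapsto \lambda \otimes 1$; such diagonal approximations exist by standard constructions applied to the projective resolutions $\bP$ and $\bQ = \Sig^{n-1}\bP$. Commutativity of the required diagram forces any candidate $\psi$ to have the block form
\[
\psi = \begin{bmatrix} \D_1 & X \\ \D_2 & Y \\ 0 & \D \end{bmatrix}
\]
for some graded module maps $X : \bP \to \bQ \otimes \bP$ and $Y : \bP \to \bP \otimes \bQ$ of degree zero. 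Multiplying out $\bd \psi - \psi \bd$ with the matrix forms of the two differentials given in the text, the chain-map condition for $\psi$ (once $\D, \D_1, \D_2$ are themselves chain maps) reduces to the two equations
\[
\bd X - X \bd^P = \D_1 \al - (\al \times \id) \D, \qquad \bd Y - Y \bd^P = \D_2 \al - (\id \times \al) \D.
\]
Hence it suffices to produce null-homotopies $X, Y$ for the right-hand sides, regarded as chain maps $\bP \to \Sig(\bQ \otimes \bP)$ and $\bP \to \Sig(\bP \otimes \bQ)$ respectively.

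The key step is to show that $\D_1 \al \simeq (\al \times \id) \D$ and, symmetrically, $\D_2 \al \simeq (\id \times \al) \D$. Since $\bP$ is a projective resolution of $k$, Lemma~\ref{lem:usefulobservation} applied to the weak equivalence $\id \otimes \e : \Sig \bQ \otimes \bP \xrightarrow{\simeq} \Sig \bQ$ yields
\[
[\bP, \Sig \bQ \otimes \bP] \cong \Ext^0_{kG}(k, \Sig \bQ \otimes \bP) \cong \Ext^0_{kG}(k, \Sig \bQ) = H^n(G, k),
\]
where a chain map $f$ corresponds to the homotopy class of $(\id \otimes \e) f$. Because $\D_1$ covers the diagonal, $(\id \otimes \e) \D_1 \simeq \id_{\bQ}$, so $\D_1 \al$ is sent to the class of $\al = \z$. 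Because $\D$ also covers the diagonal, a short cross-product calculation gives $(\id \otimes \e)(\al \times \id)\D = (\al \otimes \e) \D \simeq \al \circ (\id \otimes \e)\D \simeq \al$, so this map is likewise sent to $\z$. The two chain maps therefore represent the same class in $H^n(G, k)$ and are homotopic, producing the required $X$; a symmetric argument using $\e \otimes \id$ in place of $\id \otimes \e$ produces $Y$.

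Assembling $\D, \D_1, \D_2, X, Y$ into the matrix above defines a chain map $\psi$ that makes the diagram commute by construction. The main obstacle is the bookkeeping with Koszul signs and with the shift $\bQ = \Sig^{n-1}\bP$ when writing out compositions like $(\al \times \id)\D$ and $\D_1 \al$; once the matrix equations are correctly set up, the existence of the needed chain homotopies reduces cleanly to identifying cohomology classes in $H^n(G, k)$ via the augmentation.
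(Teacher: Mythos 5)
Your proof is correct and follows essentially the same route as the paper: write $\psi$ in block matrix form with the chain-map condition reducing to producing homotopies between $\D_1\al$ and $(\al\times\id)\D$ (and the symmetric pair), then establish those homotopies by postcomposing with the homotopy equivalence $\id\otimes\e$. The only cosmetic difference is that you invoke Lemma~\ref{lem:usefulobservation} to phrase the key step as an identification of classes in $H^n(G,k)$, whereas the paper simply notes directly that $\id\otimes\e$ is a homotopy equivalence and compares the compositions.
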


This is in some sense saying that there are no obstructions
for lifting $\D : \bP \to \bP \otimes \bP$ to a chain map $
\bP (\z) \to \bP (\z \oplus \z )$. To prove Proposition \ref{prop:firstlifting},
first observe that if $\psi$ exists, then it must be of the
form $$\psi=\left[\begin{matrix} \D _1  & H_1 \\
\D _2 & H_2 \\ 0 & \D  \end{matrix}\right]$$ where $H_1$ and $H_2$ satisfy the following formulas:
\begin{equation}\label{eqn:H1 and H2}
\begin{split} \bd H_1 - H_1 \bd & = \D _1 \al- (\al \times \id) \D \\
\bd H_2 - H_2 \bd & = \D_2 \al - (\id \times \al ) \D \ .\\
\end{split}
\end{equation}

Note that maps on the right hand side are of the form $\bP
\to \Sig (\bQ \ot \bP)$ or $\bP \to \Sig (\bP\ot \bQ)$, so
Proposition \ref{prop:firstlifting} follows from the
following lemma.

\begin{lem}\label{lem:homotopy equivalences}
There are homotopy equivalences  $(\al \times \id) \D \simeq \D_1 \al$ and $(\id \times \al ) \D \simeq  \D _2 \al$.
\end{lem}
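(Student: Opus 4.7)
The plan is to show that both $(\al \times \id)\D$ and $\D_1 \al$ represent the cohomology class $\z$ under a natural identification $[\bP, \Sig(\bQ \ot \bP)] \cong H^n(G,k)$, from which the desired homotopy follows; the second equivalence will then be obtained by the symmetric argument.

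First, since $\bQ = \Sig^{n-1}\bP$, the K\"unneth theorem shows that $\bQ \ot \bP$ is a chain complex of projectives whose homology is concentrated in degree $n-1$, where it equals $k \ot k \cong k$. Hence the augmentation $\e_{\bQ} \ot \e_{\bP} : \bQ \ot \bP \to \Sig^{n-1}k$ is a weak equivalence, and Lemma \ref{lem:usefulobservation} yields an isomorphism
\[
[\bP, \Sig(\bQ \ot \bP)] \;\xrightarrow{\;\cong\;}\; [\bP, \Sig^n k] \;=\; H^n(G,k)
\]
given by post-composition with $\Sig(\e_{\bQ} \ot \e_{\bP})$.

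Next I would evaluate each side under this isomorphism. Writing $\hat\z := \Sig^n \e_{\bP} \circ \al : \bP \to \Sig^n \bP$ for a cocycle representative of $\z$, the sign conventions for the cross product give directly $\Sig(\e_{\bQ} \ot \e_{\bP}) \circ (\al \times \id) = \hat\z \times \e_{\bP}$, so that $\Sig(\e_{\bQ} \ot \e_{\bP}) \circ (\al \times \id)\D = (\hat\z \times \e_{\bP})\D$, which is precisely the standard composite representing the cup product $\z \cdot 1 = \z$ recalled in Section \ref{sect:preliminaries}. For $\D_1 \al$, the hypothesis that $\D_1$ covers the diagonal $k \to k \ot k$ means that $(\e_{\bQ} \ot \e_{\bP}) \D_1 : \bQ \to \Sig^{n-1}k$ induces the identity on $H_{n-1}$ after the canonical identification $k \ot k \cong k$; since $\bQ$ is a projective complex with homology concentrated in degree $n-1$ and the target is concentrated in the same degree, two chain maps between them are homotopic as soon as they agree on homology, so $(\e_{\bQ} \ot \e_{\bP}) \D_1 \simeq \e_{\bQ}$. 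Post-composing with $\al$ gives $\Sig(\e_{\bQ} \ot \e_{\bP}) \circ \D_1 \al \simeq \Sig\e_{\bQ} \circ \al = \hat\z$, so $\D_1 \al$ also represents $\z$.

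Combining these, the two chain maps map to the same class in $H^n(G,k)$, hence they are chain homotopic. The second equivalence $(\id \times \al)\D \simeq \D_2 \al$ follows by the symmetric argument, with $\id \times \al$ and $\D_2$ each representing $1 \cdot \z = \z$ in the analogous identification of $[\bP, \Sig(\bP \ot \bQ)]$ with $H^n(G,k)$. I do not expect a genuine obstacle; the two points requiring care are verifying that $\e_{\bQ} \ot \e_{\bP}$ is a weak equivalence so Lemma \ref{lem:usefulobservation} applies, and unpacking the hypothesis that $\D_1$ (respectively $\D_2$) covers the diagonal as the statement that it induces the identity on $H_{n-1}$ after the canonical identification $k \ot k \cong k$.
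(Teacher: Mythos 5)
Your argument is correct and follows essentially the same strategy as the paper's: post-compose both sides with an equivalence out of $\bQ\otimes\bP$ and compare, using that $\D$ and $\D_1$ cover the diagonal. The only difference is that the paper post-composes with the homotopy equivalence $\id\otimes\e : \bQ\otimes\bP \to \bQ$ between projective complexes (so no appeal to Lemma \ref{lem:usefulobservation} is needed) and reduces both sides directly to $\al$, whereas you go one step further with the weak equivalence $\e_{\bQ}\otimes\e_{\bP} : \bQ\otimes\bP \to \Sig^{n-1}k$, invoke Lemma \ref{lem:usefulobservation}, and identify both classes with $\z$ in $H^n(G,k)$. One small slip: the codomain of your $\hat\z := \Sig^n\e_{\bP}\circ\al$ should be $\Sig^n k$, not $\Sig^n\bP$.
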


\begin{proof} We will show that $(\al \times \id)\D \simeq
\D_1 \al$. A proof for the second homotopy equivalence can
be given in a similar way. Let $\e : \bP \to k$ denote the
augmentation map and let $\id \otimes \e : \bA \otimes \bP \to \bA$ denote the chain map defined by $(\id \otimes \e)(a \otimes b)=\e(b)a$ where $\bA=\bP$ or $\bQ$. Then, we have $$(\id \otimes \e ) (\al \times \id ) \D \simeq \al (\id \otimes \e)\D \simeq \al \simeq (\id \ot \e ) \D_1 \al.$$ Since $\id \otimes \e$ is a homotopy equivalence, the result follows.
\end{proof}

Note that there is more than one chain map $\psi$ that fits into the diagram given in Proposition 
\ref{prop:firstlifting} depending on the choices we make for homotopies $H_1$ and $H_2$.  When we want to emphasize the dependency of $\psi$ on $H_1$ and $H_2$, we will use the notation $\psi (H_1, H_2)$. We will see later that the
answer to the question whether or not $\psi (H_1, H_2)$ lifts to a chain map $\tilde \psi : \bPz \to \bPz \ot \bPz$ does not depend on $H_1$ and $H_2$.

Observe that if $(H_1, H_2)$ and $(H_1', H_2')$ are two different choices of homotopies satisfying the equations in (\ref{eqn:H1 and H2}), then the differences
$H_1-H_1'$ and $H_2-H_2'$ are chain maps. We will see below that, up to chain maps, the homotopies $H_1$ and $H_2$ can be chosen to satisfy certain identities. For chain complexes $\bA$ and $\bB$, let $T: \bA \otimes \bB \to \bB \otimes \bA$ be the chain map defined by $$T(a\ot b)= (-1)^{\deg(a)\deg(b)} b\ot a.$$ Observe that for maps $f \in \bHom (\bC, \bD)$ and $g \in \bHom (\bE, \bF)$, we have
$$T(f\times g)T=(-1)^{\deg(f)\deg(g)} g \times f.$$ In particular, we have $(\id \times \al )= T(\al \times \id)T$.

Since $T: \bP \otimes \bP \to \bP \otimes \bP$ induces the identity map on homology, it is homotopic to the identity map. Let $H$ denote the homotopy that satisfies $\bd H +H\bd=\id-T$. Similarly, there is a homotopy between $\D_2$ and $T\D_1$. Let $H'$ be the homotopy $\bd H'+H'\bd=\D_2-T\D_1 $. We obtain the following:

\begin{lem}\label{lem:Relation between homotopies H1 and H2} Up to chain maps we can choose the homotopies $H_1$ and $H_2$ so that they satisfy the relation $$H_2-TH_1=H' \al +(\id \times \al)H \D.$$
\end{lem}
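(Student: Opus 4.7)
My plan is to exploit the freedom in choosing $H_1$ and $H_2$ by an additive modification by a chain map. Any other valid choice of $H_2$ differs from the given one by a chain map $\bP \to \bP \otimes \bQ$, since both choices satisfy the same equation $\bd H_2 - H_2 \bd = \Delta_2 \alpha - (\id \times \alpha)\Delta$. Consequently, if the discrepancy
$$X := H_2 - TH_1 - H'\alpha - (\id \times \alpha)H\Delta$$
is itself a chain map, then replacing $H_2$ by $H_2 - X$ gives a new valid homotopy for which the desired identity holds. So the whole task reduces to showing that $\bd(X) = 0$ as an element of the Hom-complex.

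To verify that $X$ is a cocycle, I would compute $\bd(X)$ term by term, using the Leibniz rule $\bd(fg) = \bd(f)g + (-1)^{\deg f} f\,\bd(g)$ and the analogous rule $\bd(f \times g) = \bd(f) \times g + (-1)^{\deg f}\, f \times \bd(g)$. The key facts to plug in are the defining equations of the homotopies and the auxiliary identities already recorded in the text just before the lemma: $\bd(H) = \id - T$ on $\bP \otimes \bP$; $\bd(H') = \Delta_2 - T\Delta_1$; $\bd(H_1) = \Delta_1 \alpha - (\alpha \times \id)\Delta$; $\bd(H_2) = \Delta_2 \alpha - (\id \times \alpha)\Delta$; and the symmetry relation $T(\alpha \times \id) = (\id \times \alpha)T$. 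Two subsidiary facts needed are that $\bd(\alpha) = 0$ (which is exactly the identity $\bd^Q \alpha + \alpha \bd^P = 0$ coming from $(\bd^{\bPz})^2 = 0$) and that $\bd(\id \times \alpha) = \id \times \bd(\alpha) = 0$.

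With these inputs the four contributions become
\begin{align*}
\bd(H_2) &= \Delta_2 \alpha - (\id \times \alpha)\Delta, \\
\bd(TH_1) &= T\bigl(\Delta_1 \alpha - (\alpha \times \id)\Delta\bigr) = T\Delta_1 \alpha - (\id \times \alpha)T\Delta, \\
\bd(H'\alpha) &= \bd(H')\alpha + (-1)^{1} H'\,\bd(\alpha) = \Delta_2 \alpha - T\Delta_1 \alpha, \\
\bd\bigl((\id \times \alpha)H\Delta\bigr) &= (-1)^{-1}(\id \times \alpha)\bigl(\bd(H)\Delta\bigr) = -(\id \times \alpha)\Delta + (\id \times \alpha)T\Delta,
\end{align*}
where I used $\bd(\Delta) = 0$ throughout and the fact that $\id \times \alpha$ commutes past $\bd$ up to sign. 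Taking the alternating sum with signs $(+,-,-,-)$ as dictated by the definition of $X$, every monomial cancels in pairs, so $\bd(X) = 0$.

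The main obstacle I foresee is purely bookkeeping: keeping consistent track of the degree conventions (here $\alpha$ has degree $-1$, $H,H'$ have degree $+1$, the others degree $0$) so that the Leibniz signs come out correctly, and using the commutation rule $T(\alpha \times \id) = (\id \times \alpha)T$ in the right spot so that the $T\Delta_1 \alpha$ and $(\id \times \alpha)T\Delta$ terms can cancel across different summands. Once $X$ is shown to be a chain map, the final sentence of the proof is immediate: replace $H_2$ by $H_2' := H_2 - X$; this is still a legitimate choice since $\bd(H_2') = \bd(H_2)$, and by construction $H_2' - TH_1 = H'\alpha + (\id \times \alpha)H\Delta$, completing the proof.
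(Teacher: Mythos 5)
Your proof is correct and takes essentially the same approach as the paper: you verify that $X := H_2 - TH_1 - H'\alpha - (\id\times\alpha)H\Delta$ is a cocycle in the Hom-complex by expanding $\partial(X)$ via the Leibniz rule and the defining identities $\partial(H)=\id-T$, $\partial(H')=\Delta_2-T\Delta_1$, $\partial(\alpha)=0$, together with $T(\alpha\times\id)=(\id\times\alpha)T$, and then absorb $X$ into $H_2$. The paper's proof performs exactly the same boundary computation (presented as a chain of equalities rather than term-by-term differentiation), so the arguments agree; you merely make explicit the final replacement $H_2\mapsto H_2-X$, which the paper leaves implicit.
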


\begin{proof} Applying $T$ to the first equation in (\ref{eqn:H1 and H2}), we get
\begin{equation*}
\begin{split} (\bd H_2-H_2\bd) -T(\bd H_1-H_1\bd) &= \D_2 \al- (\id \times \al)\D -T\D_1\al + T(\al \times \id) \D \\
&=(\D _2 -T\D_1)\al- (\id \times \al) (\id -T)\D \\
&=(\bd H'+H'\bd)\al -(\id \times \al) (\bd H +H\bd)\D\\
&= \bd H'\al-H'\al\bd +\bd(\id \times \al )H\D-(\id \times \al )H\D\bd .\\
\end{split}
\end{equation*}
\end{proof}

Now, we are interested in finding the obstructions for
lifting the map $\psi : \bPz \to \bP (\z \oplus \z)$ to a
chain map $\tilde \psi : \bPz \to \bPz \otimes \bPz$ so that $ \pi \tilde \psi =\psi$ where $\pi$ is the map in the extension (\ref{eqn:postnikov1}). Since the extension class
of this extension is $$ \eta=\left[\begin{matrix} \id \times \al & \al \times \id & 0\end{matrix}\right],$$ by Lemma \ref{lem:obstruction for lifting of maps}, there is a unique obstruction for lifting $\psi$ which is the homotopy class of the composition $\eta \psi$.  The following is true for this obstruction:

\begin{prop}\label{pro:main2} Let $\psi (H_1,H_2)$ be a chain map that fits into the diagram given in Proposition
\ref{prop:firstlifting} and let $\eta$ be as above. Suppose that $H_1$ and $H_2$ satisfy the relation given in Lemma \ref{lem:Relation between homotopies H1 and H2}. Then, the following holds:
\begin{enumerate}
\item If $p$ is odd and $n$ is even, then $\eta \psi \simeq 0$.
\item  If $p=2$, then $\eta \psi \simeq 0$ if and only if
$\widetilde{Sq}^{n-1} \z$ is a multiple of $\z$.
\end{enumerate}
\end{prop}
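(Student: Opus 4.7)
The plan is to reduce the question of whether $\eta\psi\simeq 0$ to a cohomological computation via the decomposition of $\bPz$ as $\bQ\oplus\bP$ (with extension class $\al$) and Lemma \ref{lem:maps from the middle}. Direct computation from the matrix expressions for $\eta$ and $\psi$ gives $\eta\psi=[\varphi_{\bQ}\ \varphi_{\bP}]$ with
\[
\varphi_{\bQ}=(\id\times\al)\Delta_1+(\al\times\id)\Delta_2,\qquad \varphi_{\bP}=(\id\times\al)H_1+(\al\times\id)H_2.
\]
By parts (ii)--(iii) of Lemma \ref{lem:maps from the middle}, $\eta\psi\simeq 0$ iff $\varphi_{\bQ}\simeq 0$ and, after correcting via $\varphi_{\bP}'=\varphi_{\bP}-K\al$ for a chosen null-homotopy $K$ of $\varphi_{\bQ}$, the class $[\varphi_{\bP}']\in[\bP,\Sigma(\bQ\otimes\bQ)]\cong H^{2n-1}(G,k)$ lies in the ideal generated by $\z$.

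To prove $\varphi_{\bQ}\simeq 0$, I would use the identification $[\bQ,\Sigma(\bQ\otimes\bQ)]\cong H^n(G,k)$. Each of $(\id\times\al)\Delta_1$ and $(\al\times\id)\Delta_2$ represents $\z$ (since $\Delta_1,\Delta_2$ cover the diagonal and $\al$ represents $\z$), but a sign discrepancy arises when comparing the two natural shift identifications $\bQ\otimes\Sigma\bQ\simeq\Sigma(\bQ\otimes\bQ)$ and $\Sigma\bQ\otimes\bQ\simeq\Sigma(\bQ\otimes\bQ)$. A careful sign computation gives $[\varphi_{\bQ}]=(1+(-1)^{n-1})\z$, which vanishes for $n$ even; in characteristic $2$ the cancellation is automatic. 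For the second condition, I would substitute $H_2=TH_1+H'\al+(\id\times\al)H\Delta$ from Lemma \ref{lem:Relation between homotopies H1 and H2} into $\varphi_{\bP}$:
\[
\varphi_{\bP}=\bigl[(\id\times\al)+(\al\times\id)T\bigr]H_1 + (\al\times\id)H'\al + (\al\times\al)H\Delta.
\]
The middle summand is already of the form $u\al$. Using $(\al\times\id)T=T(\id\times\al)$ and constructing the null-homotopy $K$ of $\varphi_{\bQ}$ from $H\circ(\id\times\al)\Delta_1$, the bracketed first term combines with $K\al$ to produce another $u\al$-contribution. Thus $[\varphi_{\bP}']$ is represented modulo $(\al)$ by $(\al\times\al)H\Delta$, which is the standard chain-level formula for the cup-$1$ square $\z\cup_1\z$.

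For $p=2$, under $[\bP,\Sigma(\bQ\otimes\bQ)]\cong H^{2n-1}(G,k)$ the class $[(\al\times\al)H\Delta]$ equals $\widetilde{Sq}^{n-1}\z$; the semilinear twist appears because the cross-product $\al\times\al$, applied with $\al=\sum\lambda_i\hat u_i$ expanded on an $\bF_2$-basis, squares each coefficient $\lambda_i$. This yields $\eta\psi\simeq 0$ iff $\widetilde{Sq}^{n-1}\z$ is a multiple of $\z$. For $p$ odd and $n$ even, invertibility of $2$ permits a further symmetrization: replacing $\Delta$ by $\tfrac{1}{2}(\Delta+T\Delta)$ makes it $T$-equivariant, after which $(\al\times\al)H\Delta$ becomes a cycle in the $\bHom$-complex whose class in $H^{2n-1}(G,k)$ can be shown to lie in $(\z)$ by exploiting the $\Sigma_2$-symmetry of $\bQ\otimes\bQ$ (the odd-characteristic analogue of the vanishing of $Sq^{n-1}\z$), so $\eta\psi\simeq 0$ unconditionally. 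The principal technical obstacle is tracking signs through the shift identifications and matching the chain-level expression $(\al\times\al)H\Delta$ with the standard Steenrod cochain representative, particularly explaining the semilinear twist for $p=2$.
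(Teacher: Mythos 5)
Your proposal follows essentially the same route as the paper: decompose $\eta\psi$ into the two components $[\varphi_1\ \varphi_2]$ coming from the splitting $\bPz=\bQ\oplus\bP$; show $[\varphi_1]=(1+(-1)^{n-1})\zeta$, hence zero when $p=2$ or $n$ is even; invoke Lemma~\ref{lem:maps from the middle} to pass to a corrected $\varphi_2'$; substitute the relation from Lemma~\ref{lem:Relation between homotopies H1 and H2} to reduce $\varphi_2'$ (modulo the ideal of $\alpha$) to $(\alpha\times\alpha)H\Delta$; and identify this with the cup-one square $\zeta\cup_1\zeta=\widetilde{Sq}^{n-1}\zeta$. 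The only cosmetic difference is that the paper first composes $\eta\psi$ with $\epsilon\otimes\epsilon$ (via Lemma~\ref{lem:usefulobservation}) before decomposing into matrix components, whereas you identify the relevant homotopy groups directly; this is the same observation applied one step earlier.

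The one place where you genuinely diverge, and where your sketch is incomplete, is the odd-characteristic case. You propose to symmetrize $\Delta$ to $\tfrac12(\Delta+T\Delta)$, observe that $(\alpha\times\alpha)H\Delta$ then becomes a cycle, and assert that its class ``can be shown to lie in $(\zeta)$ by exploiting the $\Sigma_2$-symmetry'' --- but this last step is exactly what needs an argument, and it is not supplied. The paper instead leaves $\Delta$ alone and modifies $H$: using the decomposition $\bP\otimes\bP=\bD_+\oplus\bD_-$ with $\bD_\pm=(\id\pm T)(\bP\otimes\bP)$, one chooses $H=s(\id-T)$ for a contracting homotopy $s$ of the acyclic summand $\bD_-$, so that the image of $H$ lies in $\bD_-$; since $(\epsilon\otimes\epsilon)(\alpha\times\alpha)$ vanishes on $\bD_-$ when $n$ is even, the expression $(\epsilon\otimes\epsilon)(\alpha\times\alpha)H\Delta$ is identically zero, which is cleaner and requires no further symmetry argument. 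You should either replace your symmetrization of $\Delta$ by this anti-symmetrization of $H$, or else fill in the claimed deduction. Separately, your explanation of the semilinear twist (``$\alpha\times\alpha\dots$ squares each coefficient'') glosses over the cross terms $\lambda_i\lambda_j(\hat u_i\times\hat u_j+\hat u_j\times\hat u_i)$ arising from bilinearity; one must observe that, after composing with $H\Delta$ and $\epsilon\otimes\epsilon$, those cross terms are coboundaries (this is the cup-one coboundary formula), and the paper sidesteps the issue by verifying only the quadratic behavior under scalar multiplication, $\zeta\mapsto\lambda\zeta$.
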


\begin{proof} By Lemma \ref{lem:usefulobservation}, the map $\eta \psi : \bPz \to \Sig (\bQ \ot \bQ)$ is homotopic zero if and only if its composition with
$$\e \ot \e: \Sig (\bQ \ot \bQ) \to \Sig ^{2n-1}\bk $$ is homotopic to zero where $\e \ot \e$ is the map defined by $(\e \ot \e )(a\ot b)=\e(a)\e(b)$.
The composition $\varphi=(\e \ot \e)\eta \psi$ can be expressed as
a matrix $\varphi=[\varphi_1 \ \varphi_2]$ where
\begin{equation*}
\begin{split}
\varphi_1 & = (\e \ot \e)(\id \times \al)
\D_1 +(\e \ot \e)(\al \times \id)\D_2 \\ \varphi _2 & =(\e \ot \e) (\id \times \al)H_1 +(\e \ot \e) (\al \times \id)
H_2.
\end{split}
\end{equation*}
Note that $(\e \ot \e)T=(-1)^{n-1}(\e
\ot \e)$, so we have $$ \varphi_1 \simeq(\e \ot \e)(\id \times \al)\D_1 +(\e \ot \e) (\al \times \id)T\D_1=(1+(-1)^{n-1} ) (\e \ot \e)(\id \times \al)\D_1.$$
Thus if $p=2$, or $p>2$ and $n$ is even, then $\varphi _1\simeq 0$. The homotopy between $\varphi_1$ and the zero map can be taken as the composition $G=(\e \ot \e)(\al \times \id)H'$ where $H'$ is the homotopy satisfying 
$\bd H'+H'\bd =\D _2-T\D_1$.

By Lemma \ref{lem:maps from the middle}, we have $\varphi =[\varphi_1 \
\varphi _2]\simeq [0 \ \varphi _2 ' ]$ where $\varphi_2 '= \varphi_1-G\al.$
Assuming that $p=2$, or $p>2$ and $n$ is even, and using the relation given in Lemma \ref{lem:Relation between homotopies H1 and H2}, we can simplify $\varphi_2'$ as follows:
\begin{equation}\label{eqn:psiprime}\begin{split}\varphi_2' & =(\e \ot \e)(\id \times \al)
H_1 +(\e \ot \e) (\al \times \id) H_2-(\e \ot \e)(\al \times \id)H'\al \\
&=(\e \ot \e)(\al \times \id) [-TH_1 +H_2-H'\al] \\
&=(\e \ot \e)(\al \times \al)H\D.
\end{split}
\end{equation}
To complete the proof we need to show that
$$(\e \ot \e)(\al \times \al)H\D: \bP \to \Sig ^{2n-1} \bk
$$ is homotopy equivalent to a chain map of the form
$(\Sig ^n u)\al$ where $u: \bP \to \Sig ^{n-1} \bk$. Then the result will follow from Lemma \ref{lem:maps from the middle}. Note that if $\hat \z : \bP \to \Sig ^n \bk$ is the chain map associated to $\z$, then we have $$(\e \ot \e)(\al \times \al)H\D= (\hat \z \otimes \hat \z )H \D.$$

Since $H\D$ is a homotopy between $\D$ and $T\D$, if $k=\bF_2$, then the cohomology class associated to this chain map is the Steenrod square $Sq ^{n-1} \z$ by the classical definition of Steenrod squares over $\bF _2$ (see \cite[page 272]{Book:Spanier1966AlgTop}). For an arbitrary field $k$ of characteristic $2$, we need to take the $Sq ^{n-1}$ action on $H^*(G,k)$ as the semilinear extension of
$Sq^{n-1}$ action on $H^*(G,\bF _2)$ as defined in the introduction. The reason for taking semilinear extension rather than the usual linear extension is that if we multiply $\z$ with some $\lambda \in k$,
then the homotopy class of the chain map $(\hat \z \ot \hat \z) H\D$ is multiplied by $\lambda ^2$. This can be easily seen by using the bar resolution and taking a specific homotopy for $H\D$ (see \cite[page 142]{Book:Benson95Rep&CohII}.)

For $p>2$, observe that the chain complex $\bP \ot \bP$ decomposes as $\bP \ot \bP=\bD_+ \oplus \bD_-$ where $$\bD_+=(\id+T)(\bP \ot \bP) \ \ {\rm and } \ \ \bD_-= (\id-T) (\bP \ot \bP ).$$ Note that $\bD_-$ has zero homology, so there is a contracting homotopy $s: \bD_- \to \Sig ^{-1} \bD _-$. Using this homotopy, we can choose the
homotopy $H$ between $\id$ and $T$ as the composition $s (\id -T) : \bP \ot \bP \to \bP \ot \bP$. But then the image of $H$ will be in $\bD_-$ and we will have $$(\e \ot \e)(\al \ot \al)H\D=0$$ because the composition $(\e \ot \e)(\al \ot \al)(\id-T)$ is equal to zero.
\end{proof}

Note that although we made a specific choice for $(H_1, H_2)$ in the proposition above, the same conclusion holds for every choice of homotopies $H_1$ and $H_2$. This follows from the following proposition:

\begin{prop}\label{pro:lfting for any choice} If\ $\psi (H_1, H_2)$ lifts
to a chain map $\tilde \psi$ satisfying $\pi \tilde \psi=\psi$  for
some choice of homotopies $H_1$ and $H_2$, then $\psi (H_1', H_2')$
lifts to a chain map $\tilde \psi$ satisfying $\pi \tilde \psi=\psi$
for any other choice of homotopies $(H_1', H_2')$.
\end{prop}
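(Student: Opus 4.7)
The plan is to apply Lemma \ref{lem:obstruction for lifting of maps}, which identifies liftability of $\psi(H_1,H_2)$ with the vanishing of the obstruction class $[\eta\,\psi(H_1,H_2)]\in[\bPz,\Sig(\bQ\otimes\bQ)]$. It therefore suffices to show that this obstruction class is independent of the choice of $(H_1,H_2)$.

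First I would set $f_i := H_i' - H_i$. Since both $(H_1,H_2)$ and $(H_1',H_2')$ satisfy (\ref{eqn:H1 and H2}), the maps $f_1 : \bP \to \bQ \otimes \bP$ and $f_2 : \bP \to \bP \otimes \bQ$ are chain maps. The matrix description of $\psi$ from Proposition \ref{prop:firstlifting} shows that $\phi := \psi(H_1',H_2') - \psi(H_1,H_2) : \bPz \to \bP(\z \oplus \z)$ is a chain map whose restriction to the $\bQ$-summand of $\bPz$ is zero and whose restriction to the $\bP$-summand is $(f_1,f_2,0)^T$. Composing with $\eta = [\id\times\al,\,\al\times\id,\,0]$ gives
\[
\eta\phi \;=\; [\,0,\;(\id\times\al)f_1 + (\al\times\id)f_2\,]\colon \bPz \to \Sig(\bQ\otimes\bQ),
\]
and by Lemma \ref{lem:maps from the middle}(iii) we have $[\eta\phi] = 0$ provided there exists a chain map $u : \bQ \to \bQ \otimes \bQ$ with $(\id\times\al)f_1 + (\al\times\id)f_2 \simeq u\,\al$.

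To construct such a $u$, I would use that $\id\otimes\e : \bQ\otimes\bP \to \bQ$ and $\e\otimes\id : \bP\otimes\bQ \to \bQ$ are homotopy equivalences (both sides are bounded-below complexes of projectives and the augmentation is a weak equivalence, so by \cite[Chp I, Thm 8.4]{Book:Brown1982CohGrps} the weak equivalences upgrade to homotopy equivalences). Setting $a := (\id\otimes\e)f_1$ and $b := (\e\otimes\id)f_2$ as chain maps $\bP \to \bQ$, one checks via a diagonal approximation $\D : \bP \to \bP \otimes \bP$ that $f_1 \simeq (a \times \id)\D$ and $f_2 \simeq (\id \times b)\D$ (using the counit property $(\id\otimes\e)\D \simeq \id_{\bP}$). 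The cross-product identity together with the fact that $n(n-1)$ is always even then rewrites the left-hand side as $(a \times \al)\D + (\al \times b)\D$, which under the identification $[\bP, \Sig(\bQ\otimes\bQ)] \cong H^{2n-1}(G,k)$ represents the cup-product class $[a]\z + \z[b] = \z\bigl([a] + [b]\bigr)$, using graded commutativity. On the other hand, for any chain map $u : \bQ \to \bQ \otimes \bQ$ with $[u] \in [\bQ, \bQ\otimes\bQ] \cong H^{n-1}(G,k)$, the composite $u\al$ represents $\z\,[u]$; so taking $u$ to represent $[a]+[b]$ provides the required homotopy. Hence $[\eta\,\psi(H_1',H_2')] = [\eta\,\psi(H_1,H_2)]$, which finishes the proof.

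The step I expect to require the most care is verifying that $(a \times \al)\D$ (and its analogue for $f_2$) really is a chain-level representative of the cup product $[a]\z$ in $[\bP, \Sig(\bQ\otimes\bQ)] \cong H^{2n-1}(G,k)$. The hypercohomology identifications $\bQ \otimes \bQ \simeq \Sig^{2(n-1)}(\bP\otimes\bP)$ and $[\bQ, \bQ\otimes\bQ] \cong H^{n-1}(G,k)$ must be set up explicitly, and the sign bookkeeping coming from $(f\times g)(h\times k)=(-1)^{\deg g\deg h}(fh)\times(gk)$ combined with graded commutativity must be tracked; once this is in place, the conclusion is the purely algebraic observation that every element of $\z\cdot H^{n-1}(G,k)$ lies in the image of $u\mapsto[u\al]$.
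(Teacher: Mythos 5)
Your proof is correct and follows essentially the same line as the paper's: identify the obstruction with the class of $(\id\times\al)f_1 + (\al\times\id)f_2$ on the $\bP$-summand and show it lies in the image of $u\mapsto[(\Sigma u)\al]$. The paper passes through the formula for $\varphi_2'$ in Equation (\ref{eqn:psiprime}) and simply asserts that this difference is ``a multiple of $\al$''; you supply the details of that step by replacing $f_1$, $f_2$ with $(a\times\id)\D$, $(\id\times b)\D$ and reducing the class to $\z([a]+[b])\in\z\cdot H^{n-1}(G,k)$, which is exactly the image of $\al^*$.
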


\begin{proof} If $H_1$ and $H_2$ are replaced with $H_1'$ and
$H_2'$, then $H_1'=H_1+f_1$ and $H_2'=H_2+f_2$ for some chain maps
$f_1$ and $f_2$. In Equation \ref{eqn:psiprime}, if we replace
$(H_1, H_2)$ with $(H_1', H_2')$, then the difference between the
old $\varphi '_2=(\e\ot\e)(\al \times \al)H\D$ and the new one would
be
$$(\e\ot \e)(\id \times \al)f_1+ (\e\ot \e)(\al \times \id)f_2.$$
It is clear that the homotopy class of this map is a multiple of
$\al$. So, the new $\psi_2 '$ is a multiple of $\al$ if and only if
the old one is.
\end{proof}

Now to complete the proof of Theorem \ref{thm:main2} we prove the
following:

\begin{prop}\label{pro:lifting for all}
For some (every) choice of $H_1$ and $H_2$, the chain map
$\psi (H_1, H_2)$ lifts to a chain map $\tilde \psi$ if and only if
there is a chain map $\D: \bPz \to \bPz \otimes \bPz$ satisfying
$(\id \otimes \e)\D \simeq \id $ and $(\e \otimes \id)\D \simeq
\id$.
\end{prop}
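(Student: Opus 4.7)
The parenthetical ``some (every)'' in the statement is immediate from Proposition \ref{pro:lfting for any choice}, so my plan is to establish separately the two implications between (i) a lift $\tilde{\psi}$ of $\psi(H_1, H_2)$ exists for some choice of homotopies, and (ii) there is a chain map $\D: \bPz \to \bPz \otimes \bPz$ satisfying the two counit properties.

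For the forward direction, given such a lift, I would first arrange a ``good'' choice of $H_1, H_2$ with $(\id \otimes \e) H_1 \simeq 0$ and $(\e \otimes \id) H_2 \simeq 0$. This is achievable because modifying $H_1$ by a chain map of the form $(g \times \id) \D: \bP \to \bQ \otimes \bP$, where $g: \bP \to \bQ$ is a chain map, shifts the homotopy class of $(\id \otimes \e) H_1$ in $[\bP, \bQ] \cong H^{n-1}(G, k)$ by $[g]$; since every class in $H^{n-1}(G, k)$ arises from some such $g$, the class of $(\id \otimes \e) H_1$ can be driven to zero, and analogously for $H_2$. By Proposition \ref{pro:lfting for any choice} a lift $\tilde{\psi}$ of the resulting $\psi$ still exists. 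Writing $\tilde{\psi}$ as a matrix with respect to the four-fold decomposition of $\bPz \otimes \bPz$ and noting that $\id \otimes \e$ annihilates the summands $\bQ \otimes \bQ$ and $\bP \otimes \bQ$ while acting as $\id \otimes \e$ on the other two, a direct matrix calculation combining $(\id \otimes \e) \D_1 \simeq \id_{\bQ}$, $(\id \otimes \e) \D \simeq \id_\bP$, and the good choice $(\id \otimes \e) H_1 \simeq 0$ gives $(\id \otimes \e) \tilde{\psi} \simeq \id_\bPz$, and similarly for the other counit. Thus $\D := \tilde{\psi}$ works.

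For the reverse direction, starting with $\D$ satisfying both counit properties, I put it into matrix form and focus on the $(\bP \otimes \bP, \bQ)$-entry $G: \bQ \to \bP \otimes \bP$. The chain map condition forces $G$ to be a chain map, and the counit identity gives $(\id \otimes \e) G \simeq 0$; since $\id \otimes \e: \bP \otimes \bP \to \bP$ is a weak equivalence between projective resolutions of $k$, we conclude $G \simeq 0$. Choosing a null-homotopy $K_1$ with $\bd K_1 + K_1 \bd = G$, I form the degree-one map $K: \bPz \to \bPz \otimes \bPz$ supported only at the $(\bP \otimes \bP, \bQ)$-position with value $K_1$, and set $\D' := \D - (\bd K + K \bd)$. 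Then $\D' \simeq \D$, its $(\bP \otimes \bP, \bQ)$-entry vanishes, and its $(\bP \otimes \bP, \bP)$-entry becomes $H' = H - K_1 \al$. A routine matrix calculation confirms that $H'$ is an honest chain map (its chain-map defect $\bd H - H \bd = G \al$ is cancelled by $\bd K_1 + K_1 \bd = G$), that the other modified entries remain chain maps, and that all three of them induce $\lambda \mapsto \lambda \otimes 1$ on the appropriate homology group (since $\al$ induces zero on $H_0$, the correction $K_1 \al$ is invisible on homology). Therefore $\pi \D'$ takes the form $\psi(H_1, H_2)$ for appropriate choices of $\D_1, \D_2, \D$, and $\D'$ itself is the desired lift.

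The main obstacle is the bookkeeping in the second direction: a single chain homotopy $K$ alters four matrix entries simultaneously, and one must check that the three maps landing in $\bQ \otimes \bP$, $\bP \otimes \bQ$, $\bP \otimes \bP$ simultaneously become chain maps covering $\lambda \mapsto \lambda \otimes 1$ on the relevant homology group, and that the remaining ``off-diagonal'' entries continue to satisfy equations (\ref{eqn:H1 and H2}) relative to the new $\D_1, \D_2, \D$. Conceptually, however, everything rests on the single observation that the counit conditions on $\D$ force the sole chain-map obstruction $G: \bQ \to \bP \otimes \bP$ to be null-homotopic, which is precisely the freedom needed to realize $\D$ (up to a controlled chain homotopy) as an honest lift of some $\psi(H_1, H_2)$.
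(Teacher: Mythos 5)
Your forward direction follows the same strategy as the paper: adjust the homotopy $H_1$ by a chain map (the paper subtracts $\D_1(\id\otimes\e)H_1$, you subtract $(g\times\id)\D$; both are valid and achieve $(\id\otimes\e)H_1\simeq 0$), then read off $(\id\otimes\e)\tilde\psi\simeq\id$ from the matrix. The converse is where you genuinely diverge. The paper places $\pi\D$ into a two-layer commutative ladder built from the extensions $0\to\bQ\to\bPz\to\bP\to 0$ and $0\to(\bQ\ot\bP)\oplus(\bP\ot\bQ)\to\bP(\z\oplus\z)\to\bP\ot\bP\to 0$, then uses the homotopy equivalences induced by $\id\otimes\e$ and $\e\otimes\id$ to identify $\pi\D$ with some $\psi(H_1,H_2)$ up to homotopy. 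You instead stay entirely inside the $4\times 2$ matrix for $\D$, isolate the $(\bP\ot\bP,\bQ)$-entry $G$, show it is null-homotopic, and correct $\D$ by a chain homotopy to kill $G$ outright, so that $\pi\D'$ becomes literally a $\psi(H_1,H_2)$ and $\D'$ is the lift. This is a legitimate and in some ways more concrete route.

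There is, however, a real gap in the final verification, and it is not merely ``routine bookkeeping.'' You claim the three modified entries $C'=C-(\al\times\id)K_1$, $E'=E-(\id\times\al)K_1$, $H'=H-K_1\al$ ``induce $\lambda\mapsto\lambda\otimes 1$ on the appropriate homology group,'' and justify this only by saying the correction $K_1\al$ is invisible on $H_0$ because $\al$ kills $H_0$. Two problems: first, that reason only bears on $H'$, not on $C'$ or $E'$, where the correction has $\al$ composed on the \emph{left} (via $\al\times\id$ or $\id\times\al$), so the ``zero on $H_0$'' remark does not directly apply; second and more fundamentally, the unmodified entries $C, E, H$ are \emph{not} chain maps when $G\ne 0$ (their chain-map defects are $-(\al\times\id)G$, $-(\id\times\al)G$, $G\al$ respectively), so they do not induce anything on homology, and speaking of the correction being ``invisible'' relative to them has no content. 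What actually pins down the homology behaviour of $C',E',H'$ is the counit hypothesis on $\D$ (equivalently on $\D'$), used more carefully: writing the homotopy witnessing $(\id\otimes\e)\D'\simeq\id$ in block form as $J=\begin{pmatrix}J_1&J_2\\ J_3&J_4\end{pmatrix}$, the $(\bP,\bQ)$-block $J_3$ is a degree-one chain map $\bQ\to\bP$, i.e.\ an element of $[\Sig^{n-1}\bP,\Sig^{-1}\bP]\cong\Ext^{-n}_{kG}(k,k)=0$ for $n\ge 1$, so $J_3\simeq 0$, from which $(\id\otimes\e)C'\simeq\id$ and $(\id\otimes\e)H'\simeq\id$ follow; the $\e\otimes\id$ counit handles $E'$. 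Once that is in place your argument goes through, but as written the key step is asserted rather than proved. The paper's commutative-diagram argument sidesteps all of this by invoking the homotopy equivalences induced by $\id\otimes\e$ and $\e\otimes\id$ at the level of the extensions, which is cleaner precisely at the point where your matrix approach needs extra care.
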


\begin{proof} First suppose that there is a lifting $\tilde \psi$.
Then, we take $\D$ as $\tilde \psi$ and show that it satisfies the
required conditions. We will only show that $(\id \otimes \e)\D
\simeq \id $. The second homotopy equivalence can be shown in a
similar way.

Since the restriction of $\id \otimes \e$ on $\bQ \ot \bQ$ is the
zero map, we have
$$(\id \ot \e)\tilde \psi=(\id \ot \e)\psi =\left[\begin{matrix}
\id \ot \e &0 &0 \\
0 & 0  & \id \ot \e \end{matrix}\right] \left[\begin{matrix} \D _1  & H_1 \\
\D _2 & H_2 \\ 0 & \D  \end{matrix}\right] =\left[\begin{matrix} (\id\ot \e)\D _1  & (\id \ot \e) H_1 \\
0 & (\id \ot \e) \D \end{matrix}\right].$$ Both $(\id \ot 
\e)\D_1$ and $(\id \ot \e)\D$ are homotopic to identity 
maps. In fact, by choosing $\bP$ as the bar resolution and 
$\D$ as the diagonal approximation given by $$ \D([g_1, \dots , g_n])=\sum _{i=0} ^n [g_1, \dots , g_i]\otimes (g_1\cdots g_i)[g_{i+1},\dots, g_n]$$ we can assume that $(\id \ot \e)\D=\id$. Similarly, we can choose a $\D_1$ such that $(\id \ot \e)\D_1=\id$.

Now, it is easy to see that $(\id \ot \e) H_1$ is a chain map and $(\id \ot \e)\tilde \psi \simeq \id$ if and only if $(\id \ot \e) H_1$ is homotopic to a map of the form $(\Sig ^n u)\alpha $ for some $u$. By Proposition \ref{pro:lifting for all}, we can replace $H_1$ with another homotopy up to a chain map. Replacing $H_1$ with $H_1'=H_1-\D (\id \ot \e )H_1$, we get $(\id \ot \e)H_1' \simeq 0$. This gives $(\id \ot \e)\tilde \psi \simeq \id$ as desired.

For the converse, assume that there is a chain map $\D : \bPz \to
\bPz \ot \bPz$ satisfying $(\id \otimes \e)\D \simeq \id $ and $(\e
\otimes \id)\D \simeq \id$. We need to show that the composition
$\pi \D$ is homotopy equivalent to $\psi (H_1, H_2)$ for some
$H_1$ and $H_2$. For this, it is enough to show that $\pi \D$
fits into a diagram as in Proposition \ref{prop:firstlifting}. Note that $(\id \ot \e ) \pi \D\simeq \id $, so there exists $f_Q$ and $f_P$ which makes the following diagram commute
\begin{equation*}\begin{CD} 0 @>>> \bQ @>>> \bPz @>>> \bP
@>>> 0 \\ @. @VV{f_Q}V @VV{\pi \D}V @VV{f_P}V @. \\
0 @>>> (\bQ \otimes \bP) \oplus (\bP \otimes \bQ ) @>>> \bP (\z \oplus \z ) @>>> \bP \otimes \bP @>>> 0
\\ @. @VV{\id \ot \e}V @VV{\id \ot \e}V @VV{\id \ot \e}V @. \\ 0 @>>> \bQ  @>>> \bPz @>>> \bP  @>>> 0 \\
\end{CD}
\end{equation*}
Since $\id \ot \e$ induces homotopy equivalences $\bQ \ot \bP \simeq \bQ$ and $\bP \ot \bP \simeq \bP$, we have $f_P \simeq \D$ and $f_Q$ is homotopic to $\D_1$ when it is composed with the projection to $\bQ \ot \bP$. Repeating this argument also for $\e \ot \id$, we get $f_Q\simeq (\D_1, \D_2)$. This completes the proof.
\end{proof}

In the case where $k$ is an arbitrary field of characteristic $2$, the Steenrod squares in Theorem \ref{thm:main2}  are semilinear
extensions of usual Steenrod squares over $\bF _2$. To illustrate
the importance of this point, we give the following example.

\begin{ex}\label{ex:semilinear extension} Let $G=\bbZ /2 \times \bbZ /2$
and $\{ x,y \}$ be a basis for $H^1(G, \bbF _2)$. Take $\z
=x+\lambda y$ for some $\lambda \in k$. A direct computation with
$\Lz$-modules shows that $\z$ is productive if and only if $\lambda
\in \bbF _2$. Now, we can also see this by Theorem \ref{thm:main2}.
Since $\widetilde{Sq}^0 (x+\lambda y)=x+\lambda ^2 y$, we have $\widetilde{Sq}^0 (\zeta)\in
(\zeta)$ if and only if $\lambda ^2=\lambda$. In fact, in this case
one can explicitly write down a few steps of the homotopy between
$\D$ and $T\D$ in the bar resolution and see that if $\hat \z:
\bP_1\to k$ is a map representing $\z$, then $\widetilde{Sq}^0 (\z)$ is
represented by the composition $$\bP_1 \to \bP_1 \otimes \bP_1
\maprt{\hat \z \otimes \hat \z } k$$ where the first map is given by
$[g]\to [g]\otimes [g]$ on the standard basis of the bar resolution.
\end{ex}

\section{Semi-productive elements}
\label{sect:semi-productive elements}

In this section, we introduce the notion of semi-productive elements and prove Theorem \ref{thm:main3} stated in the introduction. The arguments used in this section are very similar to the arguments used in \cite[Section
11]{Paper:BensonCarlson94ProjectiveResolutions}.

Let $k$ be a field of characteristic $p>0$ and let $\z \in H^n (G,k)$ be a nonzero cohomology class where $n\geq 1$. We say $\z$ is {\it semi-productive} if $\z$ annihilates $\Ext^* _{kG} (\Lz, k)$. Recall that $\z$ acts on $\Ext_{kG} ^*(\Lz , k)$ via the map $$\Ext_{kG}^p (k,k)\otimes \Ext_{kG} ^q (\Lz , k) \to \Ext _{kG}^{p+q} (\Lz, k)$$ which can be defined in various ways, one of which is the Yoneda splice over $k$. Note that in this case, the Yoneda
splice coincides with the outer product, hence up to a sign, this product is the same as the product defined by first tensoring the extension for $\z$ by $\Lz$ and then splicing it over $\Lz$ (see \cite[Section 6]{Book:Carlson1996Modules} for more details). So, we
can conclude the following:

\begin{prop}\label{pro:prod implies semiprod}
If $\z$ is a productive element, then it is
semi-productive.
\end{prop}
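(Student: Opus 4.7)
The plan is to pass through the action on $\Ext^*_{kG}(L_\z, L_\z)$, where the productivity hypothesis can be invoked. The key tool is the equivalent description of the $\z$-action recalled in the paragraph preceding the proposition: for any $\eta \in \Ext^q_{kG}(L_\z, k)$, the Yoneda product $\z \cdot \eta \in \Ext^{n+q}_{kG}(L_\z, k)$ can be computed by first tensoring a representative $n$-fold extension for $\z$ with $L_\z$, and then Yoneda-splicing the resulting extension with a representative of $\eta$ over $L_\z$.

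Carrying this out, tensoring the extension
\[ 0 \to k \to P_{n-1}/L_\z \to P_{n-2} \to \cdots \to P_0 \to k \to 0 \]
with $L_\z$ produces an $n$-fold extension of $L_\z$ by $L_\z$ whose class in $\Ext^n_{kG}(L_\z, L_\z)$ is precisely $\z \cdot \id_{L_\z}$, i.e.\ $\z$ acting on the identity element $\id_{L_\z} \in \Ext^0_{kG}(L_\z, L_\z)$. Productivity of $\z$ means that $\z$ annihilates all of $\Ext^*_{kG}(L_\z, L_\z)$, so in particular $\z \cdot \id_{L_\z} = 0$ in $\Ext^n_{kG}(L_\z, L_\z)$.

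Splicing this null class with any representative of $\eta$ over $L_\z$ yields the zero class, so $\z \cdot \eta = 0$ in $\Ext^{n+q}_{kG}(L_\z, k)$ for every $\eta$, which is exactly the statement that $\z$ is semi-productive. There is no substantive obstacle: the entire content is the bookkeeping identifying Yoneda splice over $k$ with the outer product, and hence with the ``tensor then splice over $L_\z$'' description, which the excerpt has already attributed to standard references. Equivalently, one can phrase the argument in the stable module category: $\z \cdot \id_{L_\z}$ is the stable morphism $L_\z \to \Sigma^n L_\z$ corresponding to $\z$, and $\z \cdot \eta$ is the Yoneda composition $\eta \circ (\z \cdot \id_{L_\z}) = \eta \circ 0 = 0$.
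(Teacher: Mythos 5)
Your argument is correct and is essentially the paper's own: the paper does not give a displayed proof but derives the proposition from the observation immediately preceding it that, up to sign, the $H^*(G,k)$-action on $\Ext^*_{kG}(L_\z,k)$ agrees with ``tensor the $\z$-extension with $L_\z$, then Yoneda-splice over $L_\z$,'' and your proof spells out exactly this: productivity kills $\z\cdot\id_{L_\z}\in\Ext^n_{kG}(L_\z,L_\z)$, so splicing with it annihilates every $\eta\in\Ext^*_{kG}(L_\z,k)$. Nothing is missing; you have merely expanded the paper's one-paragraph reduction into explicit steps.
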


If $p$ is odd and the degree of $\z$ is even, then $\z$ is productive, hence it is semi-productive. So there is nothing to study when $p$ is odd. Therefore, from now on, we assume that $k$ is a field of characteristic $2$. 

Note that the converse of Proposition \ref{pro:prod implies semiprod} is not true in general. We will show later that if a cohomology class is a nonzero divisor, then it is semi-productive. This allows us to give examples of semi-productive elements which are not productive (see Example \ref{ex:semi but not prod}).

For studying semi-productive elements, the following commuting diagram is very useful \Small
\begin{equation*}
\begin{CD}
@>{\hat \z ^*}>> \Ext^{i+n} _{kG} (k,k) @>{j^*}>> \Ext^i _{kG} (\Lz, k) @>{\delta}>> \Ext^{i+1} _{kG} (k,k) @>{\hat \z ^*}>> \Ext^{i+n+1}_{kG} (k,k) @>>> \\
@. @VV{\z \cdot}V @VV{\z \cdot}V @VV{\z \cdot}V @VV{\z \cdot}V \\
@>{\hat \z ^*}>> \Ext^{i+2n} _{kG} (k,k) @>{j^*}>> \Ext^{i+n} _{kG} (\Lz, k) @>{\delta}>> \Ext^{i+n+1} _{kG} (k,k) @>{\hat \z ^*}>> \Ext^{i+2n+1}_{kG} (k,k) @>>> \\
\end{CD}
\end{equation*}
\normalsize
where the top and the bottom row comes from the short
exact sequence
$$ 0 \to \Lz \maprt{j} \Om ^n k \maprt{\hat \z} k \to  0$$ and the vertical maps are given by multiplication by $\z$. Note that $\hat \z ^*$ can also be expressed as  multiplication by $\z$, i.e., we have $\hat \z^* (x)=\z x$ for every $x\in \Ext ^i _{kG} (k,k)$. 

Observe that for every $x\in \Ext^{i+n} _{kG} (k,k)$, we have  $\z j^* (x) =j^* (\z x) =j^* \hat \z ^* (x)=0$. So, for every $u \in \Ext^i _{kG} (\Lz, k)$, the product $\z u$ is uniquely determined by $\delta(u)$. Also note that if $u \in \Ext^i _{kG} (\Lz, k)$, then $\delta (\z u)=\z \delta (u)=\hat \z ^* \delta (u)=0.$ This means that there is a $y \in \Ext^{i+2n} _{kG} (k,k)$
such that $\z u=j^*(y)$. This element $y$ is uniquely defined modulo
the ideal $(\z)$ generated by $\z$. Hence, we can conclude that for all $i\geq 0$, the map $$\Ext^i _{kG} (\Lz, k) \maprt{\z \cdot} \Ext^{i+n} _{kG} (\Lz, k)$$ induces a $k$-linear map $$ \mu  : \Ann ^{i+1} (\zeta) \to \Ext^{i+2n} _{kG} (k,k)/ (\z)$$ where $\Ann ^{i+1} (\zeta)$ is the subspace  formed by elements $v \in \Ext ^{i+1}_{kG} (k,k)$ such that $\z v=0$. By the definition of this map, we
have the following:

\begin{lem}\label{lem:obstruction mu} Let $\z \in H^n (G, k)$ be a nonzero cohomology class of degree $n$ where $n \geq 1$. Then, $\z$ is semi-productive if and only if $\mu (v)=0$ for every $v \in \Ext^{i+1} _{kG} (k,k)$ satisfying $\z v=0$ where $i\geq 0$.
\end{lem}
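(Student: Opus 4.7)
My plan is to reduce the lemma to the element-wise equivalence
\begin{equation*}
\z u = 0 \text{ in } \Ext^{i+n}_{kG}(\Lz, k) \iff \mu(\delta(u)) = 0 \text{ in } \Ext^{i+2n}_{kG}(k,k)/(\z),
\end{equation*}
valid for every $u \in \Ext^i_{kG}(\Lz, k)$. Once this is in hand, the lemma is immediate: by exactness of the top row of the diagram preceding the lemma, $\im(\delta) = \ker(\hat\z^*) = \Ann^{i+1}(\z)$, so requiring $\mu(v) = 0$ for all $v \in \Ann^{i+1}(\z)$ is the same as requiring $\z u = 0$ for all $u \in \Ext^*_{kG}(\Lz, k)$, which is exactly semi-productivity.

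To establish the element-wise equivalence I would trace through the construction of $\mu$ applied to $v = \delta(u)$. Commutativity of the relevant square gives $\delta(\z u) = \z\,\delta(u) = \z v = 0$ (the last equality because $v$ lies in $\im(\delta) = \ker(\hat\z^*)$), so $\z u$ lies in $\ker(\delta) = \im(j^*)$ and can be written as $\z u = j^*(y)$ for some $y \in \Ext^{i+2n}_{kG}(k,k)$; by definition $\mu(v) \equiv y \pmod{(\z)}$. The key identification is then $\ker(j^*\colon \Ext^{i+2n}_{kG}(k,k) \to \Ext^{i+n}_{kG}(\Lz,k))$: by exactness of the bottom row this kernel equals $\im(\hat\z^*)$, and since $\hat\z^*$ is multiplication by $\z$, this is precisely the degree $i+2n$ component of the principal ideal $(\z) \subset H^*(G,k)$. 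Hence $\z u = j^*(y) = 0$ if and only if $y \in (\z)$ if and only if $\mu(v) = 0$.

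A brief verification that $\mu$ is well defined also belongs in the proof: two lifts $u,u'$ of a given $v$ differ by an element of $\ker(\delta) = \im(j^*)$, say $u - u' = j^*(x)$, so $\z u - \z u' = j^*(\z x)$ and the corresponding $y - y'$ differs from $\z x \in (\z)$ by an element of $\ker(j^*) \subset (\z)$. With well-definedness in place, both directions of the iff follow at once from the element-wise equivalence: the forward direction amounts to lifting any $v \in \Ann^{i+1}(\z)$ to a $u$ via exactness and invoking $\z u = 0$; the backward direction amounts to starting from an arbitrary $u$ and reading off $\z u = 0$ from $\mu(\delta(u)) = 0$. I do not anticipate any real obstacle: the entire statement is a formal consequence of the two long exact sequences, commutativity of the $\z$-multiplications, and the fact that the connecting homomorphism in each row is itself multiplication by $\z$.
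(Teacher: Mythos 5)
Your proposal is correct and follows exactly the line the paper intends; the paper states the lemma "by the definition of this map" and leaves the diagram chase implicit, and your argument supplies precisely that chase. One small tidy-up: in the well-definedness check you write $\z u - \z u' = j^*(\z x)$ and then argue modulo $(\z)$, but by exactness of the bottom row $j^*(\z x) = j^*\hat\z^*(x) = 0$ outright (as the paper notes), so in fact $\z u$ depends only on $\delta(u)$ on the nose, not just modulo $\ker j^*$.
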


In the rest of the section, we analyze the obstructions $\mu (v)$ and relate it to Massey products and then to Steenrod operations. We first recall the definition of a triple Massey product.

Let $u,v,w \in H^*(G, k)$ be homogeneous cohomology classes and let $\hat u: \bP \to \Sig ^r \bP$, $\hat v: \bP\to \Sig ^s \bP$, and $\hat w : \bP \to \Sig ^t \bP$ be chain maps whose homotopy classes are $u,v,w$, respectively. Suppose that $u v=0$ and $vw=0$. Then there exist homotopies $H:\bP \to \Sig ^{r+s-1} \bP$ and $K: \bP \to \Sig ^{s+t-1} \bP$ satisfying $\bd H+H\bd =\hat u \hat v$ and $\bd K +K \bd =\hat v \hat w$. These equations give that
$H\hat w+\hat uK$ is a chain map 
$\bP \to \Sig ^{r+s+t-1} \bP$, so it defines a cohomology class in $H^{r+s+t-1} (G, k)$. This cohomology class is well-defined modulo the subspace $u H^{s+t-1} (G,k)+ wH^{r+s-1} (G,k)$ generated by $u$ and $w$. The triple Massey product $\la u,v,w\ra $ is defined as the set of homotopy classes of chain maps 
$H\hat w+\hat uK$ over all possible choices of $H$ and $K$. Alternatively, one can consider the triple Massey product as an equivalence class and denote by 
$\la u,v,w \ra$ a representative of this equivalence class. We use this second approach here in this paper.

Given  $u,v,w \in H^*(G, k)$ as above, let $\bP (u)$ and $\bP (w)$ denote the extensions with extension classes $u$ and $v$, respectively. We have a diagram of the following form \Small
\begin{equation*}
\begin{CD}
@>{w\cdot}>> [\bP , \Sig^{s+t-1} \bP] @>{\pi^*}>> [\bP (w), \Sig ^{s+t-1} \bP] @>{j ^*}>> [\bP, \Sig ^s \bP] @>{w\cdot}>> [\bP, \Sig ^{s+t}\bP] @>>> \\
@. @VV{u\cdot}V @VV{u\cdot}V @VV{u\cdot}V @VV{u \cdot}V \\
@>{w \cdot}>> [\bP , \Sig^{r+s+t-1} \bP] @>{\pi^*}>> [\bP (w), \Sig^{r+s+t-1} \bP] @>{j^*}>> [\bP, \Sig ^{r+s} \bP] @>{w \cdot }>> [\bP, \Sig ^{r+s+t}\bP] @>>> \\
\end{CD}
\end{equation*}
\normalsize where the horizontal sequences comes from the extension
$$ 0 \to \Sig ^{t-1} \bP \maprt{j}\bP (w) \maprt{\pi} \bP \to 0.$$
A diagram chase similar to the one used above shows that for every $v\in [\bP , \Sig ^s \bP]$ satisfying $uv=0=vw$, there is a class $x \in [\bP , \Sig ^{r+s+t-1} \bP]$ well-defined modulo
$$J(u,w):=u [\bP , \Sig^{s+t-1} \bP ] +w [\bP , \Sig^{r+s-1} \bP]$$ such that $\pi^*(x)=uy$ where $y \in [\bP (u), \Sig ^{s+t-1} \bP]$ is a class satisfying $j^*(y)=v$. We have the following:

\begin{lem}
The cohomology class $x\in [\bP , \Sig ^{r+s+t-1} \bP]$ can be taken as the triple Massey product $\la u, v, w\ra$ modulo $J(u,w)$.
\end{lem}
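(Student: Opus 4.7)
The plan is to build a representative of the class $y \in [\bP(w), \Sig^{s+t-1}\bP]$ with $j^*(y) = v$ directly out of the chain-level data that defines the Massey product $\la u,v,w\ra$, then compute $uy$ and its preimage $x$ under $\pi^*$ on the nose, and finally match the remaining slack with the indeterminacy $J(u,w)$.

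First I would fix chain maps $\hat u$, $\hat v$, $\hat w$ representing $u,v,w$ together with homotopies $H : \bP \to \Sig^{r+s-1}\bP$ and $K : \bP \to \Sig^{s+t-1}\bP$ satisfying $\bd H + H\bd = \hat u\hat v$ and $\bd K + K\bd = \hat v\hat w$. By the definition recalled just above the lemma, $\la u,v,w\ra$ is represented by $H\hat w + \hat u K$ modulo $J(u,w)$. Next, using the $kG$-splitting $\bP(w) = \Sig^{t-1}\bP \oplus \bP$ with differential of the form given in Section~\ref{sect:Extensions} with $\al = \hat w$, I would define
\[
y := [\,\hat v,\; K\,] : \bP(w) \to \Sig^{s+t-1}\bP .
\]
The chain-map condition for $y$ reduces to $\bd K - K\bd = \hat v\hat w$, which holds by our choice of $K$, and the restriction $y\circ j$ is $\hat v$, so $y$ is a lift of $v$ as required.

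Then I would post-compose with the shift of $\hat u$ to get $\hat u\, y = [\,\hat u\hat v,\; \hat u K\,]$. Since $\hat u\hat v \simeq 0$ via the homotopy $H$, Lemma~\ref{lem:maps from the middle}(ii), applied with $\al = \hat w$, yields
\[
\hat u\, y \;\simeq\; [\,0,\; \hat u K - H\hat w\,]
\]
as chain maps $\bP(w) \to \Sig^{r+s+t-1}\bP$. Under the same splitting, $x\circ\pi$ has the matrix form $[\,0,\; x\,]$, so the equation $\pi^*(x) = uy$ becomes $[\,0,\; x\,] \simeq [\,0,\; \hat u K - H\hat w\,]$. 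By Lemma~\ref{lem:maps from the middle}(iii), this is equivalent to
\[
x - (\hat u K - H\hat w) \simeq c\,\hat w
\]
for some chain map $c : \bP \to \Sig^{r+s-1}\bP$, i.e.\ $x \equiv \hat u K - H\hat w$ modulo $w\cdot[\bP, \Sig^{r+s-1}\bP]$. Since $\hat u K - H\hat w$ (up to the usual sign absorbed into the indeterminacy) is precisely the Massey product representative $\la u,v,w\ra$, this gives the claim.

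Finally I would check that the two sources of ambiguity match $J(u,w)$ exactly: replacing the lift $y$ by another one changes $y$ by a class pulled back from $[\bP, \Sig^{s+t-1}\bP]$, which after applying $u\cdot$ shifts $x$ by an element of $u\cdot[\bP, \Sig^{s+t-1}\bP]$, while the Lemma~\ref{lem:maps from the middle}(iii) step contributes the $w$-piece $w\cdot[\bP, \Sig^{r+s-1}\bP]$; together they give $J(u,w)$, in agreement with the standard indeterminacy of $\la u,v,w\ra$. The main obstacle I expect is purely bookkeeping: keeping the sign conventions for shifts and compositions from Section~\ref{sect:preliminaries} consistent throughout the $[\,\hat v,\; K\,]$ construction and the two applications of Lemma~\ref{lem:maps from the middle}, and verifying that different choices of $\hat v$, $H$, and $K$ move $\hat u K - H\hat w$ only within $J(u,w)$, so the identification of $x$ with $\la u,v,w\ra$ is intrinsic.
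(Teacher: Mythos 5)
Your proof follows essentially the same route as the paper's: take $\hat y = [\hat v,\ K]$ as the lift of $v$, compute $\hat u\hat y = [\hat u\hat v,\ \hat u K]$, and apply Lemma~\ref{lem:maps from the middle} to push the first coordinate to zero at the cost of an $H\hat w$ correction, identifying $x$ with the Massey product representative modulo $J(u,w)$. One small remark: your $\hat u K - H\hat w$ is what Lemma~\ref{lem:maps from the middle}(ii) literally yields, whereas the paper writes $\hat u K + H\hat w$ to match its stated form of the Massey product representative; since the relevant application is over a field of characteristic $2$ the sign is immaterial, and your parenthetical acknowledging this is appropriate (you also correctly write the indeterminacy as $J(u,w)$, where the paper's proof has a typo $J(v,w)$).
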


\begin{proof} Let $K$ be a contracting homotopy for $\hat v\hat w$. Then, we can take $y\in [\bP(w), \Sig ^{s+t-1} \bP]$ as the homotopy class of the chain map given by $\hat y =[\hat v \ K]$. This means that $uy$ is represented by 
$\hat u \hat y =[\hat u \hat v \ \hat u K]$. Let $H$ be a contracting homotopy for $\hat u \hat v$. Then, by Lemma \ref{lem:maps from the middle}, we have
$\hat u \hat y \simeq [0 \ \hat uK+H\hat w].$ So, $x$ can be taken as the homotopy class of the chain map $\hat uK +H\hat w$. Hence, $x \equiv \la
u, v,w \ra$ modulo $J(v,w)$.
\end{proof}

As a consequence, we obtain the following:

\begin{lem} Let $\z \in H^n (G, k)$ be a nonzero cohomology class of degree $n$ where $n \geq 1$ and let $\mu$ be the assignment as in Lemma \ref{lem:obstruction mu}. Then,  $\mu (v) = \la \z , v , \z \ra $ mod $(\z)$ for every $v \in H^{i+1} (G,k)$, $i\geq 0$, which satisfies $\z v=0$.
\end{lem}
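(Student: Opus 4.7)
The plan is to reformulate the definition of $\mu(v)$ in terms of the chain-complex long exact sequence attached to $0\to \Sig^{n-1}\bP \xrightarrow{j} \bP(\z) \xrightarrow{\pi} \bP \to 0$, and then apply the previous lemma with $u=w=\z$.

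First I would establish an isomorphism
$$\Theta \colon [\bP(\z),\Sig^{m}\bP] \xrightarrow{\cong} \Ext^{m-n}_{kG}(\Lz,k)$$
matching the hyper-Ext long exact sequence
$$\cdots \xrightarrow{\cdot\z} H^m(G,k) \xrightarrow{\pi^*} [\bP(\z),\Sig^m\bP] \xrightarrow{j^*_{\bP(\z)}} H^{m-n+1}(G,k) \xrightarrow{\cdot\z} H^{m+1}(G,k) \to \cdots$$
with the classical Ext long exact sequence
$$\cdots \xrightarrow{\cdot\z} H^m(G,k) \xrightarrow{j^*} \Ext^{m-n}_{kG}(\Lz,k) \xrightarrow{\delta} H^{m-n+1}(G,k) \xrightarrow{\cdot\z} H^{m+1}(G,k) \to \cdots$$
associated with $0\to\Lz\to\Om^n k\xrightarrow{\hat\z}k\to 0$. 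Under $\Theta$, the maps $\pi^*$ and $j^*$ are identified, as are the two boundary maps $j^*_{\bP(\z)}$ and $\delta$; further, $\Theta$ is equivariant for multiplication by $\z$. The existence of such $\Theta$ follows from a five-lemma comparison, using that the four $H^*(G,k)$-columns and the multiplication-by-$\z$ maps coincide on the nose (a concrete $\Theta$ can be produced by choosing a chain map from $\bP(\z)$ to a suitable projective resolution built out of $\Lz$, as in \cite[Lemma 3.1]{Paper:BensonCarlson94ProjectiveResolutions}).

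Next I would rewrite the definition of $\mu(v)$ through $\Theta$. Given $v\in H^{i+1}(G,k)$ with $\z v=0$, by exactness there exists $y\in[\bP(\z),\Sig^{i+n}\bP]$ with $j^*_{\bP(\z)}(y)=v$; then $j^*_{\bP(\z)}(\z y)=\z v=0$, so $\z y=\pi^*(x)$ for some $x\in H^{i+2n}(G,k)$ which is well-defined modulo $(\z)$ (the freedom in $y$ contributes $\z\cdot H^{i+n}(G,k)$ and the kernel of $\pi^*$ contributes $\z\cdot H^{i+n-1}(G,k)$). The identifications furnished by $\Theta$ make this $x$ coincide with $\mu(v)$ as constructed in Lemma \ref{lem:obstruction mu}. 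Finally, the previous lemma applied with $u=w=\z$ (so $r=t=n$, $s=i+1$, and $J(\z,\z)=(\z)$) asserts that any $x$ with $\pi^*(x)=\z y$ and $j^*_{\bP(\z)}(y)=v$ equals the triple Massey product $\la\z,v,\z\ra$ modulo $(\z)$, which finishes the argument.

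The main obstacle is the first step: producing $\Theta$ and verifying that it intertwines the two connecting maps. Once naturality of $\Theta$ is in hand, the diagram chase defining $\mu(v)$ is formally the same as the one computing $\la\z,v,\z\ra$ in the previous lemma, and the identification is immediate.
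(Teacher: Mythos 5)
Your plan matches the paper's: both reduce to the previous lemma applied with $u=w=\z$ and $s=i+1$, after identifying $[\bP(\z),\Sig^{n+i}\bP]$ with $\Ext^i_{kG}(\Lz,k)$ compatibly with the long exact sequences. The paper realizes this identification more directly by noting that, for $i\geq 0$, any chain map $\bP(\z)\to\Sig^{n+i}\bP$ factors through the brutal truncation $\Gamma_n\bP(\z)$, which is a projective resolution of $\Sig^n\Lz$ — this makes the compatibility with $\pi^*$, the connecting map, and multiplication by $\z$ immediate, whereas your five-lemma comparison still needs a concrete comparison map constructed and checked (as you yourself flag).
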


\begin{proof} If we take $u=w=\z$ and $s=i+1$ in the second commuting diagram above, we obtain a diagram similar to the first commuting diagram above. We just need to show that $[\bP(\z), \Sig ^{n+i} \bP]$ is isomorphic to $\Ext ^i _{kG} (\Lz, k)$. Note that for $i\geq 0$, we have $$[\bP(\z), \Sig ^{n+i} \bP] \cong [\Gamma _n \bP (\z) , \Sig ^{n+i} \bP]$$ where $\Gamma _n \bPz $ denotes the truncation of $\bP$ at $n$.  The complex $\Gamma _n \bPz$ has trivial cohomology
except at dimension $n$ and $H_n (\G _n \bPz , k)\cong \Lz$. So, $\Gamma _n \bPz$ is homotopy equivalent to $\Sig ^n \bP (\Lz)$ where $\bP (\Lz )$ is a projective resolution of $\Lz$. This gives that $$[\bP(\z), \Sig ^{n+i} \bP] \cong [\Sig ^n \bP(\Lz), \Sig ^{n+i} \bP]\cong \Ext ^i _{kG} (\Lz , k).$$ This completes the proof.
\end{proof}

Combining the lemmas above, we obtain the following:

\begin{prop}\label{pro:main3} Let $\z \in H^n (G, k)$ be a nonzero cohomology class of degree $n$ where $n \geq 1$. Then, $\z$ is semi-productive if and only if $\la \z, v, \z \ra \equiv 0$ mod $(\z)$ for every $v \in H^* (G,k)$ which satisfies $\z v=0$.
\end{prop}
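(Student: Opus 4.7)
The plan is to obtain the proposition as a direct consequence of the two preceding lemmas, since the substantive content has already been absorbed into them. First I would invoke Lemma \ref{lem:obstruction mu}, which reduces semi-productivity of $\z$ to the vanishing of the obstruction map
\[
\mu : \Ann^{i+1}(\z) \to \Ext^{i+2n}_{kG}(k,k)/(\z)
\]
on every $v \in \Ann^{i+1}(\z)$ and every $i \geq 0$. Then I would apply the lemma just established identifying $\mu(v)$ with the triple Massey product $\la \z, v, \z\ra$ modulo $(\z)$. Substituting the second identification into the first condition converts the statement ``$\mu(v)=0$ for every $v$ with $\z v=0$'' into the statement ``$\la \z, v, \z\ra \equiv 0$ mod $(\z)$ for every $v$ with $\z v=0$,'' which is exactly the content of the proposition.

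The one small point I would check is that the quantifier ``$v \in H^*(G,k)$ with $\z v = 0$'' in the proposition matches the quantifier ``$v \in \Ann^{i+1}(\z)$ for some $i \geq 0$'' coming out of Lemma \ref{lem:obstruction mu}. For $v$ of positive degree, these coincide. For $v \in H^0(G,k) = k$, the condition $\z v = 0$ forces $v = 0$ (since $\z$ is nonzero), and then $\la \z, 0, \z\ra = 0$ trivially, so no additional condition is imposed and the statements agree. I do not foresee a genuine obstacle in this step; the real work lies entirely upstream, in the diagram chase defining $\mu$ and in the identification of the lift $\hat y = [\hat v \ K]$ in $[\bP(\z), \Sig^{n+i}\bP]$ with the Massey product representative $\hat u K + H \hat w$ via Lemma \ref{lem:maps from the middle}. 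Once those lemmas are in hand, the proposition follows by formal combination.
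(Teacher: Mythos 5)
Your proposal matches the paper exactly: the paper derives Proposition \ref{pro:main3} simply by combining Lemma \ref{lem:obstruction mu} with the lemma identifying $\mu(v)$ with $\la \z, v, \z \ra$ mod $(\z)$, offering no further argument. Your extra check on the degree-zero case is sound and harmless, even though the paper leaves it implicit.
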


Proposition \ref{pro:main3} completes the proof of
$(i)\Leftrightarrow (ii)$ in Theorem \ref{thm:main3}. For the equivalence of the statements $(ii)$ and $(iii)$, we quote the following result by Hirsch \cite{Paper:Hirsch55Quelques}.

\begin{thm}[Hirsch \cite{Paper:Hirsch55Quelques}] Let $X$ be a simplicial complex. Then for every $u, \z \in H^* (X,\bF _2)$, we have $$\la \z , u, \z \ra \equiv u Sq^{n-1} \z \ \
\mod (\z).$$
\end{thm}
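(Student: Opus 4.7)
The plan is to work at the simplicial cochain level with $\bF_2$ coefficients and exploit Steenrod's higher cup products $\smile_i$. Recall that these satisfy the coboundary identity
\[
d(a \smile_i b) = da \smile_i b + a \smile_i db + a \smile_{i-1} b + b \smile_{i-1} a,
\]
and that for a cocycle $f$ of degree $n$ representing $\z$, the class $Sq^{n-1}\z$ is represented at the cochain level by $f \smile_1 f$.

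First I would assemble a compatible pair of defining cochains for the Massey product. Pick cocycles $f \in \z$ and $g \in u$; since $\z u = 0$ we may choose a cochain $H$ with $dH = fg$, and the $\smile_1$ identity applied to the cocycles $f, g$ yields $d(f \smile_1 g) = fg + gf$, so $K := H + f \smile_1 g$ satisfies $dK = gf$. By the paper's chain-level description of the Massey product, $\la \z, u, \z \ra$ is then represented by
\[
Hf + fK \;=\; (Hf + fH) + f \cdot (f \smile_1 g).
\]

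Next I would reduce the symmetric piece $Hf + fH$. Applying the $\smile_1$ coboundary identity to $H \smile_1 f$ (with $df = 0$) gives
\[
d(H \smile_1 f) \;=\; (fg)\smile_1 f + (Hf + fH),
\]
so $Hf + fH \equiv (fg) \smile_1 f$ modulo coboundaries. The heart of the argument is Hirsch's derivation formula, which asserts that, modulo coboundaries, $\smile_1$ behaves as a derivation for the cup product,
\[
(ab) \smile_1 c \;\equiv\; a(b \smile_1 c) + (a \smile_1 c)\cdot b.
\]
Specialising to $(a,b,c) = (f, g, f)$, and using that $(f \smile_1 f) g \equiv g(f \smile_1 f)$ modulo coboundaries in characteristic $2$, we obtain $Hf + fH \equiv f (g \smile_1 f) + g (f \smile_1 f)$.

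Combining the pieces,
\[
Hf + fK \;\equiv\; g(f \smile_1 f) + f \bigl[(g \smile_1 f) + (f \smile_1 g)\bigr] \pmod{\text{coboundaries}}.
\]
The $\smile_2$ coboundary identity applied to the cocycles $f, g$ reads $d(f \smile_2 g) = (f \smile_1 g) + (g \smile_1 f)$, so the bracketed cochain is of the form $dX$; because $df = 0$, one has $f \cdot dX = d(fX)$, which is a coboundary outright. Hence the Massey representative is cohomologous to $g(f \smile_1 f)$ modulo $(\z)$, and this is precisely a representative of $u \cdot Sq^{n-1}\z$. The main obstacle, and the only nontrivial input beyond systematic use of Steenrod's coboundary identity, is Hirsch's derivation formula for $\smile_1$ on simplicial cochains; once it is in hand, the theorem reduces to the bookkeeping above.
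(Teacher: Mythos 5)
The paper does not prove this theorem at all: it cites Hirsch's 1955 note and only remarks that one could give an alternative proof via the chain-map methods developed for Theorems 1.3 and 1.5. So your argument is filling a gap rather than competing with an in-paper proof, and it appears to be correct.

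Your route is the classical simplicial one. Setting $K=H+f\smile_1 g$ so that $dK=gf$ is the right normalization; the identity $d(H\smile_1 f)=(fg)\smile_1 f + Hf + fH$ correctly shows $Hf+fH\equiv(fg)\smile_1 f$; the Hirsch derivation formula $(ab)\smile_1 c\equiv a(b\smile_1 c)+(a\smile_1 c)b$ then peels off the $(f\smile_1 f)g$ term; and $d(f\smile_2 g)=f\smile_1 g+g\smile_1 f$ makes the remaining cross-term $f\bigl[(g\smile_1 f)+(f\smile_1 g)\bigr]$ an exact coboundary, since $df=0$. Each reduction stays within ``cocycle plus coboundary'', so the final identification of the Massey representative with $g(f\smile_1 f)=u\,Sq^{n-1}\z$ modulo $(\z)$ is legitimate. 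This is essentially Hirsch's own approach. The route the paper gestures at is different in flavor: it would replace $\smile_1$ by an explicit chain homotopy $H$ between $\Delta$ and $T\Delta$ on a projective resolution $\bP$ of $k$, and would track the obstruction inside $\bP(\z)\otimes\bP(\z)$ as in the proof of Theorem 1.5; that version stays inside group cohomology and adapts directly to the semilinear $\widetilde{Sq}^{n-1}$ over a general field of characteristic $2$, whereas yours is more elementary and works for an arbitrary simplicial complex over $\bF_2$. Two small things a careful reader would want spelled out: a one-line check that both sides of the Hirsch identity have the same coboundary (each is $abc+cab$ when $a,b,c$ are cocycles), which is what makes ``$\equiv$ modulo coboundaries'' meaningful; and a reference or the short combinatorial verification of the Hirsch formula itself, which is the only ingredient you invoke without proof.
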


This completes the proof of Theorem \ref{thm:main3}. A proof for Hirsch's theorem can be found in \cite{Paper:Hirsch55Quelques}. Although the theorem is for $k=\bF_2$, one can easily extend the argument so that it holds for any field of characteristic $2$. In fact, for the cohomology of groups, one can easily give a separate proof for Hirsch's theorem using the methods in this paper.

\begin{rem} The similarities between the statements of Theorem \ref{thm:main2} and Theorem \ref{thm:main3} suggest that the Massey product approach could be used to prove Theorem \ref{thm:main2} as well. For this, one would need to generalize the notion of triple Massey product $\la \zeta, u, \zeta \ra$ to the case where $u$ is an element in $H^*(G,M)$ for some $kG$-module $M$. One would also need
to prove a more general version of Hirsch's theorem. We did not take this approach here since proving this more general version of Hirsch's theorem is not much shorter than proving Theorem \ref{thm:main2}. We also find the methods used in the proof of Theorem \ref{thm:main2} more interesting and possibly more useful for proving other theorems.
\end{rem}

We end the paper with an example which shows that being
semi-productive is a strictly weaker condition than being
productive.

\begin{ex}\label{ex:semi but not prod} Let $G=\bbZ/2 \times \bbZ/2$ and let $\{ x,y \}$ be a basis for $H^1(G,\bbF _2)$. Consider the class  $\z =x^2+xy+y^2 \in H^2 (G, \bbF _2)$. Since $\z$ is a nonzero divisor, it is semi-productive by Theorem \ref{pro:main3}. But, by Theorem \ref{thm:main1}, $\z$ is not productive since $Sq^1 (\z)=xy(x+y)$ is not divisible by $x^2+xy+y^2$.
\end{ex}

Unfortunately it is not as easy to find cohomology classes which are not semi-productive. For $k=\bbF_2$, we do not know if there exists a nonzero cohomology class $\z \in H^n(G, k)$ which is not semi-productive. On the otherhand, for an arbitrary field $k$ of characteristic $2$, it is possible to construct such examples. The following example is provided to us by Martin Langer. It comes from his earlier work on secondary multiplications in Tate cohomology (see \cite[Remark 3.6]{Paper:Langer2009SecondaryProducts}).
 
\begin{ex}\label{ex:MLangerexample} Let $G=Q_8$ be the quaternion group of order $8$ and $k$ be a field of characteristic $2$ which includes a primitive third root of unity. Let $\alpha \in k$ such that $\al ^2+\al+1=0$ and let $x,y$ be generators of $H^1(G, \bbF)$. Then, if we take $\z=\al x +y$ and $u=\al ^2 x+y$, then we get $\z u=0$, but $\widetilde{Sq}^0 (\z)u=(\al ^2 x+y)^2=\al x^2+y^2$ is not a multiple of $\z$ in $H^*(G,k)$. So, $\z$ is not semi-productive. 
\end{ex}

%\bibliographystyle{ih}
%\bibliography{ihmain}
%\end{document}
%%%%%%%%%%%%%%%%%%%%%%%%%%%%%%%%%%
\providecommand{\bysame}{\leavevmode\hbox
to3em{\hrulefill}\thinspace}
\providecommand{\MR}{\relax\ifhmode\unskip\space\fi MR }
% \MRhref is called by the amsart/book/proc definition of \MR.
\providecommand{\MRhref}[2]{%
  \href{http://www.ams.org/mathscinet-getitem?mr=#1}{#2}
} \providecommand{\href}[2]{#2}

\end{document}